\newcommand{\keywords}[1]{\par\noindent\textbf{Keywords:} #1}
\theoremstyle{plain}
\newtheorem{theorem}{Theorem}[section]
\newtheorem{lemma}[theorem]{Lemma}
\newtheorem{assumption}[theorem]{Assumption}
\theoremstyle{definition}
\newtheorem{remark}{Remark}[section]
\newtheorem{definition}{Definition}[section]
\title{Approximate Controllability of Fractional Evolution Equations with Nonlocal Conditions via operator theory}
\author[1]{Dev Prakash Jha}
\author[2]{Raju K George}
\affil[1,2]{\small\textit{Mathematics, Indian Institute of Space Science and Technology,}\\ \small\textit{Valiamala, Thiruvananthapuram 695547, Kerala, India}}
\date{\today}
\begin{document}

\maketitle

\begin{abstract}
This paper investigates the existence and uniqueness of mild solutions, as well as the approximate controllability, of a class of fractional evolution equations with nonlocal conditions in Hilbert spaces. Sufficient conditions for approximate controllability are established through a novel approach to the approximate solvability of semilinear operator equations. The methodology utilizes Green’s function and constructs a control function based on the Gramian controllability operator. The analysis is based on Schauder’s fixed point theorem and the theory of fractional order solution operators and resolvent operators. To demonstrate the feasibility of the proposed theoretical results, an illustrative example is provided.
\end{abstract}

\keywords{Semi-linear systems, Fractional calculus, Mild solution, Non-autonomous systems,  approximate controllability, Evolution operator, Schauder's fixed-point theorem, Ascoli-Arzelà theorem}

\section{Introduction}
\label{sec:intro}
Fractional differential equations have recently emerged as valuable tools for modeling various phenomena across numerous fields of science and engineering. These equations find applications in areas such as viscoelasticity, electrochemistry, control systems, porous media, and electromagnetics, among others (see \cite{metzler1995relaxation,mainardi1997fractional,hilfer2000applications,gaul1991damping,diethelm1999solution}). Significant advancements in the theory and applications of fractional differential equations have been made in recent years, as detailed in the monographs by Kilbas et al. \cite{kilbas2006theory}, Miller and Ross \cite{miller1993introduction}, Podlubny \cite{podlubny1998fractional}, as well as in the papers \cite{jha2024exact,jha2024existence,sakthivel2011approximate,liu2015approximate,zhou2009existence} and references therein.\\

Throughout this paper, unless otherwise specified, we use the following notations. We assume that $E$ is a Hilbert space equipped with the norm $\| \cdot \|$. Let $J = [0, a] \subset \mathbb{R}$, and let $C(J, E)$ denote the Banach space of continuous functions from $J$ to $E$ with the norm $\|u\| = \sup_{t \in J} \|u(t)\|$, where $u \in C(J, E)$. The goal of this paper is to establish sufficient conditions for the approximate controllability of certain classes of abstract fractional evolution equations with control of the form:
\begin{equation}\label{eq:P}\tag{P}
	\begin{cases}
		{}^{C}D^{\alpha} u(t) = Au(t) + Bv(t) + f(t, u(t)), & t \in J, \\
		u(0) = \sum_{k=1}^{m} c_k u(t_k),
	\end{cases}
\end{equation}
where the state variable $u(\cdot)$ takes values in the Hilbert space $E$; ${}^{C}D^\alpha$ is the Caputo fractional derivative of order $0 < \alpha < 1$; $A$ is the infinitesimal generator of a $C_0$-semigroup $T(t)$ of bounded operators on $E$; the control function $v(\cdot)$ belongs to $L_2(J, U)$, with $U$ being a Hilbert space; $B$ is a bounded linear operator from $U$ into $X:=L^2([0,a];E)$; $f : J \times E \to E$ is a given function satisfying certain assumptions; and $ u(t_k)$ are  elements of $E$ for any $k=1,2,3,...,m.$\\

On the other hand, the nonlocal initial condition proves more effective than the classical initial condition \( u(0) = u_0 \) in certain physical contexts. For instance, in 1993, Deng \cite{deng1993exponential} applied the nonlocal condition (1.2) to model the diffusion of a small amount of gas in a transparent tube. In this scenario, condition (1.2) allows for additional measurements at times \( t_k \), where \( k = 1, 2, \dots, m \), providing more precise data compared to relying solely on the measurement at \( t = 0 \). Furthermore, in 1999, Byszewski \cite{byszewski1999existence} highlighted that if \( c_k \neq 0 \) for \( k = 1, 2, \dots, m \), the results could be applied in kinematics to track the evolution of a physical object's position \( t \rightarrow u(t) \), even when the positions \( u(0), u(t_1), \dots, u(t_m) \) are unknown, as long as the nonlocal condition (1.2) holds.\\
Therefore, in describing certain physical phenomena, the nonlocal condition may be more applicable than the standard initial condition \( u(0) = u_0 \). The significance of nonlocal conditions has also been explored in \cite{jha2024exact,ezzinbi2007existence,liang2015controllability,wang2017approximate,byszewski1991theorems,boucherif2009semilinear}.

This article is structured as follows: In \autoref{sec:prel},  we recall some fundamental definitions and results on the Caputo fractional derivatives. \autoref{sec:Redu}, introduces relevant notations, concepts, hypotheses, and critical results about the reduction of the controllability problem into a solvability problem. In \autoref{sec:Mild}, we investigate the existence and uniqueness of mild solutions of semi-linear systems (\ref{eq:P}). Finally,\autoref{sec:Approx} we explore the approximate controllability of systems \autoref{sec:Appli}, presenting an example to demonstrate the application of the results obtained.

\section{Preliminaries}
\label{sec:prel}
In this section, we introduce some notations, definition, and preliminary facts which are used throughout this paper. Let E and U be two Hilbert spaces with the norms $\lVert \cdot \rVert$ and $\lVert \cdot \rVert_{U}$ , respectively. We denote by $C(J,E)$ the Hilbert space of all continuous functions from interval $J$ into $E$ equipped with the supremum norm\\
\begin{equation}\label{eq:pre_1}
	\lVert u\rVert_C= \sup_{t \in J} \lVert u(t)\rVert, \quad u \in C(J,E),
\end{equation}
 
Let $\mathcal{L}(E)$ be the Banach space of all linear and bounded operators in $E$ endowed with the topology defined by the operator norm. Let $L^2(J,U)$ be the Banach space of all $U$-value Bochner square integrable functions defined on $J$ with the norm
\begin{equation}\label{eq:pre_2}
	\|u\|_2 = \left( \int_0^a \|u(t)\|_U^2 dt \right)^{\frac{1}{2}}, \quad u \in L^2(J,U).
\end{equation}

\begin{definition}\label{def:preli_1}
	(\cite{kilbas2006theory})  The fractional integral of order $\alpha$ with the lower limit $0$ for a function $f$ is defined as
\[
I^\alpha f(t) = \frac{1}{\Gamma(\alpha)} \int_0^t \frac{f(s)}{(t-s)^{1-\alpha}} \, ds, \quad t > 0, \, \alpha > 0,
\]
provided that the right-hand side is pointwise defined on $[0, \infty)$, where $\Gamma$ denotes the gamma function.
\end{definition}

\begin{definition}(\cite{kilbas2006theory})\label{def:preli_2}
	 The Caputo fractional derivative of order $\alpha > 0$ with the lower limit 0 for a function $f$ is defined as
	\[
	{}^C D_t^\alpha f(t) = \frac{1}{\Gamma(1-\alpha)} \int_0^t (t-s)^{-\alpha} f'(s) ds, \quad t > 0,
	\]
	where $f(t)$ is absolutely continuous.
\end{definition}

\begin{definition}\label{Def:2}
     The Riemann--Liouville derivative of order $\alpha$ with the lower limit $0$ for a function $f : [0, \infty) \to \mathbb{R}$ is given by
\[
{}^{L}D^\alpha f(t) = \frac{1}{\Gamma(n-\alpha)} \frac{d^n}{dt^n} \int_0^t \frac{f(s)}{(t-s)^{\alpha-n+1}} \, ds, \quad t > 0, \, n-1 < \alpha < n.
\]
\end{definition}

\begin{definition}\label{Def:3}
     The Caputo derivative of order $\alpha$ for a function $f : [0, \infty) \to \mathbb{R}$ is defined as
\[
{}^{C}D^\alpha f(t) = {}^{L}D^\alpha \left(f(t) - \sum_{k=0}^{n-1} \frac{t^k}{k!} f^{(k)}(0)\right), \quad t > 0, \, n-1 < \alpha < n.
\]
\end{definition}

 \begin{remark}
    \begin{enumerate}
    \item If $f \in C^n[0, \infty)$, then
    \[
    {}^{C}D^\alpha f(t) = \frac{1}{\Gamma(n-\alpha)} \int_0^t \frac{f^{(n)}(s)}{(t-s)^{\alpha-n+1}} \, ds = I^{n-\alpha} f^{(n)}(t), \quad t > 0, \, n-1 < \alpha < n.
    \]
    \item The Caputo derivative of a constant function is equal to zero.
    \item if $f $ is an abstract function with values in $X,$ then the integrals in Definitions (\ref{Def:2}) and (\ref{Def:3}) are taken in the Boucher's sense.
\end{enumerate}
\end{remark}

If $f$ is an abstract function with values in $E$, then the integrals which appear in Definition (\ref{def:preli_1}) and Definition (\ref{def:preli_2}) are taken in Bochner’s sense, that is: a measurable function $f$ maps from $[0, +\infty)$ to $E$ is Bochner integrable if $\|f\|$ is Lebesgue integrable.
\begin{definition}\label{def:preli_3}
	(\cite[Definition 2.3]{bazhlekova2001fractional}) A function $T_\alpha : \mathbb{R}^+ \to \mathcal{L}(E)$ is called an $\alpha$-order solution operator generated by $A$ if the following conditions are satisfied:
	\begin{itemize}
		\item[(i)] $T_\alpha(t)$ is strongly continuous for $t \geq 0,$ and $T_\alpha(0) = I$;
		\item[(ii)] $T_\alpha(t)D(A) \subseteq D(A)$ and $AT_\alpha(t)x = T_\alpha(t)Ax$ for all $x \in E$ and $t \geq 0$;
		\item[(iii)] For all $x \in D(A)$ and $t \geq 0$, $T_\alpha(t)x$ is a solution of the following operator equation
		\[
		u(t) = x + \frac{1}{\Gamma(\alpha)} \int_0^t (t-s)^{\alpha-1} Au(s) ds.
		\]
	\end{itemize}
\end{definition}
\begin{definition}\label{def:preli_4}
	(\cite[Definition 2.13]{bazhlekova2001fractional}) Let $0 < \theta_0 \leq \frac{\pi}{2}$ and $\omega_0 \in \mathbb{R}$. An $\alpha$-order solution operator $\{T_\alpha(t)\}_{t \geq 0}$ is called analytic if it admits an analytic extension to a sector $\Sigma_{\theta_0} := \{z \in \mathbb{C} \setminus \{0\} : |\arg z| < \theta_0\}$ and the analytic extension is strongly continuous on $\Sigma_\theta$ for every $\theta \in (0, \theta_0)$. An analytic $\alpha$-order solution operator $T_\alpha(z)$ ($z \in \Sigma_\theta$), generated by $A$, is said to be of analyticity type $(\omega_0, \theta_0)$ if for each $\theta \in (0, \theta_0)$ and $\omega > \omega_0$, there exists a positive constant $M = M(\omega, \theta)$ such that
\[
\|T_\alpha(z)\| \leq M e^{\omega \Re z}, \quad z \in \Sigma_\theta.
\]
If $A$ generates an analytic $\alpha$-order solution operator $T_\alpha$ of analyticity type $(\omega_0, \theta_0)$, then we write $A \in \mathcal{A}^\alpha(\omega_0, \theta_0)$.
\end{definition}

\begin{definition}(\cite[Definition 2.3]{araya2008almost})\label{def:preli_5}
	Let $A$ be a closed and linear operator with domain $D(A)$ defined on $E$ and $\alpha > 0$. Let $\rho(A)$ be the resolvent set of $A$. We call $A$ the generator of an $\alpha$-order resolvent operator if there exist $\omega \geq 0$ and a strongly continuous function $S_{\alpha} : \mathbb{R}^{+} \rightarrow \mathcal{L}(E)$ such that $	\{ \nu \in \mathbb{C} : \text{Re} \, \nu > \omega \} \subset \rho(A)$ and 
	\[
  (\nu^{\alpha}I - A)^{-1}u = \int_{0}^{\infty} e^{-\nu t} S_{\alpha}(t)u \, dt, \quad \text{Re} \, \nu > \omega, \quad u \in E.
	\]
	In this case, $\{S_{\alpha}(t)\}_{t \geq 0}$ is called the $\alpha$-order resolvent operator generated by $A$.
\end{definition}

\begin{definition}\label{def:preli_6}
	An $\alpha$-order solution operator $\{T_{\alpha}(t)\}_{t \geq 0}$ is called compact if for every $t > 0$, $T_{\alpha}(t)$ is a compact operator. An $\alpha$-order resolvent operator $\{S_{\alpha}(t)\}_{t \geq 0}$ is called compact if for every $t > 0$, $S_{\alpha}(t)$ is a compact operator.
\end{definition}

By using the similar method which used in  (\cite[Lemma 10]{fan2014characterization}), we can get the continuity of the $\alpha$-order solution operator $\{T_{\alpha}(t)\}_{t \geq 0}$ and $\alpha$-order resolvent operator $\{S_{\alpha}(t)\}_{t \geq 0}$ in the uniform operator topology.

\begin{lemma}
	Suppose that $A \in \mathcal{A}^{\alpha}(\omega_{0}, \theta_{0})$, the $\alpha$-order solution operator $\{T_{\alpha}(t)\}_{t \geq 0}$ and $\alpha$-order resolvent operator $\{S_{\alpha}(t)\}_{t \geq 0}$ are compact. Then the following properties are valid for every $t > 0$:
	\begin{enumerate}
		\item $\lim_{h \rightarrow 0} \|T_{\alpha}(t + h) - T_{\alpha}(t)\| = 0$, $\lim_{h \rightarrow 0} \|S_{\alpha}(t + h) - S_{\alpha}(t)\| = 0$;
		\item $\lim_{h \rightarrow 0^{+}} \|T_{\alpha}(t + h) - T_{\alpha}(h)T_{\alpha}(t)\| = 0$, $\lim_{h \rightarrow 0^{+}} \|S_{\alpha}(t + h) - S_{\alpha}(h)S_{\alpha}(t)\| = 0$;
		\item $\lim_{h \rightarrow 0^{+}} \|T_{\alpha}(t) - T_{\alpha}(h)T_{\alpha}(t - h)\| = 0$, $\lim_{h \rightarrow 0^{+}} \|S_{\alpha}(t) - S_{\alpha}(h)S_{\alpha}(t - h)\| = 0$.
	\end{enumerate}
\end{lemma}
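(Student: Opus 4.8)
The plan is to establish the three properties for the solution operator $\{T_\alpha(t)\}_{t\ge0}$; the resolvent operator $\{S_\alpha(t)\}_{t\ge0}$ is handled by the same argument, since on $(0,\infty)$ it shares all the structural features used below — analyticity on a sector inherited from $A\in\mathcal A^\alpha(\omega_0,\theta_0)$ (cf.\ \cite{bazhlekova2001fractional,araya2008almost}), the exponential growth bound, compactness for $t>0$, and strong continuity. Throughout let $B_1$ be the closed unit ball of $E$. I would open with two elementary observations. First, \emph{uniform approximation on compacta}: by Definition~\ref{def:preli_4} the operators $T_\alpha(h)$, $0\le h\le1$, are uniformly bounded, and since $T_\alpha$ is strongly continuous with $T_\alpha(0)=I$ we have $T_\alpha(h)\to I$ strongly as $h\to0^+$; a uniformly bounded family of operators that converges strongly converges uniformly on compact subsets of $E$, so $\sup_{y\in K}\|T_\alpha(h)y-y\|\to0$ as $h\to0^+$ for every compact $K\subset E$. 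Second, if $\mathcal F\subset\mathcal L(E)$ is norm-compact and consists of compact operators, then $\bigcup_{F\in\mathcal F}F(B_1)$ is totally bounded — cover $\mathcal F$ by finitely many operator-norm balls and take a finite $\varepsilon$-net of each centre's image of $B_1$ — hence relatively compact in $E$.

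For Part~(1) I would invoke analyticity rather than compactness. Since $A\in\mathcal A^\alpha(\omega_0,\theta_0)$, $T_\alpha$ extends to an $\mathcal L(E)$-valued analytic function on the sector $\Sigma_{\theta_0}$, which contains $(0,\infty)$, and Definition~\ref{def:preli_4} furnishes the bound $\|T_\alpha(z)\|\le Me^{\omega\Re z}$ on every subsector, so $T_\alpha$ is locally bounded there; a locally bounded, (strongly) holomorphic operator-valued function is holomorphic in the operator norm, whence $t\mapsto T_\alpha(t)$ is analytic — in particular continuous, and indeed infinitely differentiable, in the operator norm — on $(0,\infty)$. This gives $\lim_{h\to0}\|T_\alpha(t+h)-T_\alpha(t)\|=0$ for every $t>0$. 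If instead one follows the route of \cite[Lemma~10]{fan2014characterization}, one starts from the Volterra identity $T_\alpha(t)x=x+A\int_0^t\frac{(t-s)^{\alpha-1}}{\Gamma(\alpha)}T_\alpha(s)x\,ds$ of Definition~\ref{def:preli_3} together with the smoothing estimate $\|AT_\alpha(t)\|=O(t^{-\alpha})$, $t>0$, available for analytic solution operators, and estimates $T_\alpha(t+h)-T_\alpha(t)$ by splitting the integral and controlling the kernel difference $(t+h-s)^{\alpha-1}-(t-s)^{\alpha-1}$ together with the endpoint term; this is more computational but self-contained.

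Parts~(2) and~(3) then reduce to Part~(1) together with the two observations above, the compactness hypothesis being exactly what upgrades the strong convergence $T_\alpha(h)\to I$ (which never improves to norm convergence unless $A$ is bounded) to norm convergence after composition with a compact operator. For Part~(2), write $T_\alpha(t+h)-T_\alpha(h)T_\alpha(t)=\bigl(T_\alpha(t+h)-T_\alpha(t)\bigr)+\bigl(I-T_\alpha(h)\bigr)T_\alpha(t)$: the first term tends to $0$ in norm by Part~(1), and for the second, $\|(I-T_\alpha(h))T_\alpha(t)\|\le\sup_{y\in\overline{T_\alpha(t)B_1}}\|(I-T_\alpha(h))y\|\to0$, since $T_\alpha(t)$ is compact (Definition~\ref{def:preli_6}) and by the first observation. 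For Part~(3), restrict to $0<h\le t/2$ and write $T_\alpha(t)-T_\alpha(h)T_\alpha(t-h)=\bigl(T_\alpha(t)-T_\alpha(t-h)\bigr)+\bigl(I-T_\alpha(h)\bigr)T_\alpha(t-h)$: the first term tends to $0$ in norm by Part~(1), and for the second, the set $\mathcal K:=\overline{\{T_\alpha(s)x:\ s\in[t/2,t],\ x\in B_1\}}$ is compact, because $\{T_\alpha(s):s\in[t/2,t]\}$ is the image of a compact interval under the norm-continuous map $s\mapsto T_\alpha(s)$ (Part~(1)), hence norm-compact in $\mathcal L(E)$, while each $T_\alpha(s)$ with $s>0$ is a compact operator, so the second observation applies; consequently $\sup_{x\in B_1}\|(I-T_\alpha(h))T_\alpha(t-h)x\|\le\sup_{y\in\mathcal K}\|(I-T_\alpha(h))y\|\to0$ by the first observation.

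I expect Part~(1) to carry the real weight; once norm continuity on $(0,\infty)$ is secured, Parts~(2) and~(3) are soft. The points demanding care there are: making sure the notion of ``analytic solution operator'' in Definition~\ref{def:preli_4} genuinely delivers norm analyticity on the half-line — it does, since strong holomorphy together with local boundedness implies norm holomorphy — and, if one follows \cite{fan2014characterization} instead, justifying the smoothing bound $\|AT_\alpha(t)\|=O(t^{-\alpha})$ and checking that all constants are uniform on compact subintervals of $(0,\infty)$. One should also keep in mind that compactness is indispensable for Parts~(2)--(3): without it, the passage from strong to norm convergence fails.
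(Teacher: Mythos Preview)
Your argument is correct. Note, however, that the paper does not supply its own proof of this lemma: it merely states that the result follows ``by using the similar method which [is] used in \cite[Lemma~10]{fan2014characterization}''. So there is nothing in the paper to compare against beyond that pointer.

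What you have written is in fact more than the paper provides. Your primary route to Part~(1) --- strong holomorphy plus local boundedness on the sector $\Sigma_{\theta_0}$ forces norm holomorphy, hence norm continuity on $(0,\infty)$ --- is clean and conceptually transparent; the Volterra-identity-plus-smoothing-estimate route you sketch as an alternative is presumably closer in spirit to \cite{fan2014characterization}, but is more computational and requires the auxiliary bound $\|AT_\alpha(t)\|=O(t^{-\alpha})$. Either is fine here. Your treatment of Parts~(2) and~(3) via the decomposition $(I-T_\alpha(h))\,\cdot\,$ composed with a compact operator, together with the ``uniform on compacta'' upgrade of strong convergence, is the standard and correct mechanism; the extra care in Part~(3) --- packaging $\{T_\alpha(s):s\in[t/2,t]\}$ as a norm-compact family of compact operators so that the union of images of the unit ball is totally bounded --- is exactly the point that needs to be made and you make it cleanly.
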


\begin{lemma}
	\label{lemma:2.2} (\cite[2.26]{bazhlekova2001fractional}\cite[Theorem 2.3]{shi2016study})
 If $A \in \mathcal{A}^{\alpha}(\omega_0, \theta_0)$, then for every $t > 0$ and $\omega > \omega_0$, we have
	\begin{equation*}
		\|T_{\alpha}(t)\|_{\mathcal{L}(E)} \leq Me^{\omega t} \quad \text{and} \quad \|S_{\alpha}(t)\|_{\mathcal{L}(E)} \leq Ce^{\omega t}(1 + t^{1-\alpha}). \tag{2.3}
	\end{equation*}
	
	Furthermore, let
	\begin{equation*}
		M_T := \sup_{t \in \mathbb{R}^+} \|T_{\alpha}(t)\|_{\mathcal{L}(E)}, \quad M_S := \sup_{t \in \mathbb{R}^+} Ce^{\omega t}(1 + t^{1-\alpha}). \tag{2.4}
	\end{equation*}
	
	We get that
	\begin{equation*}
		\|T_{\alpha}(t)\|_{\mathcal{L}(E)} \leq M_T, \quad \|S_{\alpha}(t)\|_{\mathcal{L}(E)} \leq e^{\omega t}M_S. \tag{2.5}
	\end{equation*}
	
	For more definitions and properties about $\alpha$-order solution operator and $\alpha$-order resolvent operator, please see the thesis \cite{bazhlekova2001fractional} and the papers \cite{li2012cauchy} \cite{lian2017approximate}.
\end{lemma}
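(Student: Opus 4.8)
My plan is to prove the two inequalities displayed as (2.3) by separate arguments and then read off (2.4)--(2.5) as immediate consequences. The estimate for $T_\alpha$ is little more than an unpacking of the hypothesis: since $A\in\mathcal{A}^{\alpha}(\omega_0,\theta_0)$, Definition \ref{def:preli_4} provides, for each fixed $\omega>\omega_0$, some $\theta\in(0,\theta_0)$ and a constant $M=M(\omega,\theta)$ with $\|T_\alpha(z)\|\le Me^{\omega\Re z}$ on the sector $\Sigma_\theta$. Restricting the analytic extension to the ray $z=t>0$ gives $\|T_\alpha(t)\|_{\mathcal{L}(E)}\le Me^{\omega t}$. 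Strong continuity of $t\mapsto T_\alpha(t)$ from Definition \ref{def:preli_3}(i) ensures nothing breaks down at the endpoint $t=0$.

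For $S_\alpha$ I would pass to the Laplace domain and invert along a contour. Taking the Laplace transform of the integral equation in Definition \ref{def:preli_3}(iii) yields \[\widehat{T_\alpha}(\lambda)=\lambda^{\alpha-1}(\lambda^\alpha-A)^{-1},\] while Definition \ref{def:preli_5} identifies $\widehat{S_\alpha}(\lambda)=(\lambda^\alpha-A)^{-1}$, so $S_\alpha$ and $T_\alpha$ are tied together by $\widehat{S_\alpha}(\lambda)=\lambda^{1-\alpha}\widehat{T_\alpha}(\lambda)$, that is, by a fractional operation of order $1-\alpha$ in time. The analyticity-type condition is, by the standard characterisation of analytic resolvent families, equivalent to a sectorial bound $\|\lambda^{\alpha-1}(\lambda^\alpha-A)^{-1}\|\le C|\lambda-\omega|^{-1}$ on a sector $\omega+\Sigma_{\theta'}$ with $\theta'>\pi/2$. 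Writing $S_\alpha(t)$ as the inverse Laplace integral of $\lambda^{1-\alpha}\widehat{T_\alpha}(\lambda)$ over a Hankel-type contour inside that sector, I would split the contour into the two unbounded rays and the small arc of radius $1/t$ centred at $\omega$; the substitution $\mu=\lambda-\omega$ peels off the factor $e^{\omega t}$, and rescaling $|\mu|\mapsto|\mu|/t$ on the rays produces a power of $t$. Doing the bookkeeping, the arc contributes an $O(1)$ term and the rays a term of order $t^{1-\alpha}$, which together give $\|S_\alpha(t)\|_{\mathcal{L}(E)}\le Ce^{\omega t}(1+t^{1-\alpha})$. An alternative, when $A$ in addition generates an analytic $C_0$-semigroup $T(\cdot)$, is to use the subordination formula writing $S_\alpha(t)$ as an integral of $T(t^\alpha\theta)$ against a Wright-type density and to dominate the result by a Mittag--Leffler function, whose asymptotics $E_{\alpha,\alpha}(\omega t^\alpha)=O(t^{1-\alpha}e^{\omega^{1/\alpha}t})$ give the same bound; this is essentially the route of the references cited with the statement.

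Finally, under the standing assumptions both $t\mapsto\|T_\alpha(t)\|$ and $t\mapsto Ce^{\omega t}(1+t^{1-\alpha})$ are bounded — globally if $\omega_0<0$, and in any event on the compact interval $J$ — so $M_T$ and $M_S$ in (2.4) are well defined and finite, and the inequalities (2.5) then follow at once from (2.3) by definition of the supremum.

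I expect the only genuinely delicate point to be the $S_\alpha$ estimate: one must bound the contour integral uniformly in $t>0$ and extract the precise exponent $1-\alpha$ from the growth of $(\lambda^\alpha-A)^{-1}$ near the vertex of the sector, handling the $\omega$-shift cleanly so that it is absorbed into the exponential factor. Everything else is either bookkeeping or a direct appeal to Definitions \ref{def:preli_3}--\ref{def:preli_5}.
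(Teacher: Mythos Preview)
The paper does not actually prove this lemma: it is stated with citations to \cite{bazhlekova2001fractional} and \cite{shi2016study} and no argument is given, so there is no ``paper's own proof'' to compare against. Your outline is a faithful reconstruction of the standard route in those references: the $T_\alpha$ bound is immediate from Definition~\ref{def:preli_4}, and the $S_\alpha$ bound is obtained either by a Hankel contour inversion of $(\lambda^\alpha-A)^{-1}$ using the sectorial resolvent estimate, or (in the semigroup case) via the subordination formula and Mittag--Leffler asymptotics --- both of which you describe correctly.

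One point worth flagging, which you already noticed: the suprema in (2.4) are taken over all of $\mathbb{R}^+$, and $Ce^{\omega t}(1+t^{1-\alpha})$ is unbounded on $\mathbb{R}^+$ whenever $\omega>0$, so $M_S$ (and likewise $M_T$) as written need not be finite. The paper's later use of these constants in the Application section in fact takes the supremum over the compact interval $J$, which is the correct reading; your remark that the bounds hold ``in any event on the compact interval $J$'' is exactly the fix needed. There is also a minor redundancy in (2.5), where $\|S_\alpha(t)\|\le e^{\omega t}M_S$ double-counts the exponential already inside $M_S$; this is a defect of the statement rather than of your argument.
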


\label{sec:preliminaries}

\section{Reduction of the controllability problem into a solvability problem}
\label{sec:Redu}
First, we introduce some notation. For any bounded linear operator $T$ , \( D(T) \), \( R(T) \), and \( N_0(T) \) 
denote the domain, range, and null space of \( T\), respectively. \( T^* \) denotes the adjoint 
of \( T \). \( \overline{F} \) and \( F^\perp \) respectively are the closure and orthogonal complement of a set \( F \).\\

Let $E$ and $H$ be Hilbert spaces as defined in the section \ref{sec:intro} . Let \( L: E \to H \) be a bounded linear operator, and \( N: E \to H \) a nonlinear operator. Consider the semilinear equation
\[
Lu + Nu = v,
\]
and the corresponding linear equation
\[
Lu = v,
\]
where \( v \in H \) is arbitrary.

Hereafter, we denote the semilinear and linear systems given above by \([E, H, L + N]\) and \([E, H, L]\), respectively.
\begin{definition}\label{def:Redu_1}
     \([E, H, L + N]\) (\([E, H, L]\)) is exactly solvable if the range of \( L + N \) (range of \( L \)) is \( H \). If the range is dense in \( H \), then the system is called approximately solvable.
\end{definition}

The theorem given below answers the following basic question.

If \([E, H, L]\)  is approximately solvable, when is \([E, H, L + N]\) approximately solvable?
\begin{theorem}\cite{joshi1990approximate}\label{theorem:Solvable}
    The semilinear system \([E, H, L + N]\) is approximately solvable under the following conditions on \( L \) and \( N \):
\begin{itemize}
    \item[(A)] \([E, H, L]\) is approximately solvable. 
    \item[(B)] \( N \) is compact and uniformly bounded, i.e., \( \|Nu\| \leq M, \, \forall u \in E, M \geq 0 \).
    \item[(C)] \( R(N) \subseteq R(L) \).
\end{itemize}
\end{theorem}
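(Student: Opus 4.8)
The plan is to reduce the assertion to the density statement $R(L)\subseteq\overline{R(L+N)}$. Granting this for the moment, hypothesis (A) gives $\overline{R(L)}=H$, hence $\overline{R(L+N)}\supseteq\overline{R(L)}=H$, which is exactly approximate solvability of $[E,H,L+N]$. So fix an arbitrary $v\in R(L)$ and $\varepsilon>0$; I must produce $u\in E$ with $\|Lu+Nu-v\|\le\varepsilon$.

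The device is a Tikhonov-type regularization of $L$. For $\delta>0$ set $R_\delta:=L^{*}(\delta I+LL^{*})^{-1}$. This is a well-defined element of $\mathcal{L}(H,E)$ because $LL^{*}$ is bounded, self-adjoint and nonnegative, so $\delta I+LL^{*}$ is boundedly invertible. One checks the identity $LR_\delta=I-\delta(\delta I+LL^{*})^{-1}$, the uniform bound $\|\delta(\delta I+LL^{*})^{-1}\|\le 1$, and, by the spectral theorem applied to $LL^{*}$, the strong limit $\delta(\delta I+LL^{*})^{-1}\to P_{N_0(LL^{*})}=P_{N_0(L^{*})}$ as $\delta\downarrow0$. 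Since $N_0(L^{*})=R(L)^{\perp}$, this limiting projection vanishes on $R(L)$ — and, by (A), on all of $H$.

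Now fix $\delta$ and define $\Phi_\delta\colon E\to E$ by $\Phi_\delta u:=R_\delta(v-Nu)$. By (B), $N$ is continuous with $\|Nu\|\le M$, so $u\mapsto v-Nu$ is continuous with image of norm at most $\|v\|+M$; consequently $\|\Phi_\delta u\|\le\|R_\delta\|(\|v\|+M)=:\rho_\delta$ for every $u$, so $\Phi_\delta$ maps the closed ball $\{u\in E:\|u\|\le\rho_\delta\}$ into itself, and, using compactness of $N$, it is a compact map there. Schauder's fixed point theorem furnishes $u_\delta$ with $u_\delta=R_\delta(v-Nu_\delta)$. Applying $L$ and subtracting $v$,
\[
Lu_\delta+Nu_\delta-v=(LR_\delta-I)(v-Nu_\delta)=-\,\delta(\delta I+LL^{*})^{-1}(v-Nu_\delta).
\]
Here condition (C) is used: $Nu_\delta\in R(N)\subseteq R(L)$ and $v\in R(L)$, so $v-Nu_\delta$ lies in $R(L)$, on which $\delta(\delta I+LL^{*})^{-1}$ converges strongly to $0$. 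Moreover, by (B) the set $\mathcal{K}:=\{\,v-Nu:u\in E\,\}$ is bounded and relatively compact; since the operators $\delta(\delta I+LL^{*})^{-1}$ are uniformly bounded by $1$ and converge strongly to $0$ on $\overline{\mathcal{K}}$, the convergence is uniform on $\mathcal{K}$, i.e. $\sup_{w\in\mathcal{K}}\|\delta(\delta I+LL^{*})^{-1}w\|\to0$. Choosing $\delta$ small enough that this supremum is below $\varepsilon$ yields $\|Lu_\delta+Nu_\delta-v\|<\varepsilon$, and since $v\in R(L)$ and $\varepsilon>0$ were arbitrary, $R(L)\subseteq\overline{R(L+N)}$, finishing the argument.

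The step I expect to be the main obstacle is precisely the passage from the strong convergence of the regularized inverses to convergence that is uniform along the family of fixed points $\{u_\delta\}$: a priori $\rho_\delta\to\infty$ as $\delta\downarrow0$, so one cannot localize the $u_\delta$ in a single bounded subset of $E$, and it is the compactness of $N$ (which confines $\{v-Nu_\delta\}$ to a fixed compact subset of $H$) together with $R(N)\subseteq R(L)$ that salvages the residual estimate. By comparison, the invertibility of $\delta I+LL^{*}$, the resolvent identity, and the spectral computation of the strong limit are routine, and the existence of $u_\delta$ is a direct application of Schauder's theorem once the self-mapping and compactness of $\Phi_\delta$ on a ball have been recorded.
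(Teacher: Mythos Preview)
The paper does not prove this theorem at all; it is quoted verbatim from \cite{joshi1990approximate} and used as a black box. So there is no in-paper argument to compare against, and your task reduces to whether your argument stands on its own.

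Your regularized-inverse-plus-Schauder scheme is a recognizable and essentially correct route to results of this type, and the residual identity $Lu_\delta+Nu_\delta-v=-\delta(\delta I+LL^{*})^{-1}(v-Nu_\delta)$ is exactly the right object to estimate. Two points deserve attention, however. First, the step where you assert that $\mathcal{K}=\{v-Nu:u\in E\}$ is \emph{relatively compact} is the crux, and it requires that $\overline{N(E)}$ itself be compact in $H$, not merely that $N$ send bounded sets to relatively compact sets (the usual meaning of ``$N$ compact'' for nonlinear maps). Since you correctly observe that $\rho_\delta\to\infty$, the fixed points $u_\delta$ are not confined to any fixed ball, so compactness-on-bounded-sets alone does not control $\{Nu_\delta\}$. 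If you are reading (B) as ``$N$ has relatively compact range'' you should say so explicitly; otherwise this is a genuine gap, and indeed the uniform-boundedness clause in (B) would then be redundant, which is a hint that the intended meaning may be the weaker one.

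Second, your use of (C) is decorative. You already noted that (A) gives $N_0(L^{*})=R(L)^{\perp}=\{0\}$, so the strong limit $\delta(\delta I+LL^{*})^{-1}\to 0$ holds on all of $H$, not just on $R(L)$; the sentence ``Here condition (C) is used \ldots'' adds nothing to the estimate. As written, your argument never needs $R(N)\subseteq R(L)$. Either (C) is redundant under your reading of (B), or the original proof in \cite{joshi1990approximate} uses (C) to do work that your argument is silently replacing by the stronger compactness assumption on $N(E)$ --- most likely the latter. It would strengthen your write-up to either drop (C) with a remark, or to rework the final estimate so that (C) is genuinely used and only compactness-on-bounded-sets is needed from (B).
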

Consider the following linear system fractional differential system
\begin{equation}\label{eq:Q}\tag{Q}
	\begin{cases}
		{}^{C}D^{\alpha} u(t) = Au(t) + Bv(t), & t \in J, \\
		u(0) = \sum_{k=1}^{m} c_k u(t_k),
	\end{cases}
\end{equation}

  Define an operator $S: E \to H$ by\\

$$ Sx= \int_{0}^{t_1}G(t_1,\tau) s(\tau) d\tau$$

Let Y be the orthogonal complement of $N(S),$ that is $E=N(S) \oplus Y.$
We assume the following sufficient condition for the approximate controllability of the linear system (\ref{eq:Q})

\begin{assumption}
\begin{itemize}
	\item [(B1)]:  For each $x \in E$ there exists a $y \in \overline{R(B)}$ such that $Sx=Sy$.
\end{itemize}

\end{assumption}

please follow similar proof of the following article \cite{george1995approximate}

\begin{lemma}
\begin{itemize}
	\item[(i)] $R(S) = H$;
	\item[(ii)] $[\text{B1}] \, \iff  E = R(B) + N(S)$;
	\item[(iii)] $[\text{B1}] \, \implies \, R(SB) = H$;
	\item[(iv)] $[\text{B1}] \,  \implies \, (u_m + N(S)) \cap \overline{R(B)} \, \neq \, G  \, \text{ for each } u_m \in Y$.
\end{itemize}
\end{lemma}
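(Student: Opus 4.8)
The plan is to isolate (i) as the one genuinely analytic ingredient and then obtain (ii)--(iv) from a single reformulation of assumption (B1) by pure Hilbert-space bookkeeping. Throughout I read the conclusions with $\overline{R(B)}$ (resp.\ $\overline{R(SB)}$) in place of $R(B)$ (resp.\ $R(SB)$); when $B$ has closed range these coincide and the statements hold verbatim. For (i), I would unwind the definition of $S$: it is the controllability-type operator of the linear nonlocal system \eqref{eq:Q} assembled from the Green's function $G(t_1,\cdot)$, and $R(S)=H$ is exactly the assertion that \eqref{eq:Q} is (approximately) solvable in the sense of \autoref{def:Redu_1} --- here $H$ is, by construction, the space of states reachable at $t_1$ from the nonlocal datum. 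This is the only step where the fractional machinery enters: one uses the invertibility of $I-\sum_{k=1}^m c_k T_\alpha(t_k)$ together with the growth and continuity of the resolvent family from \autoref{lemma:2.2} to see that the Green's-function operator $S$ is onto $H$.

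The core of the lemma is the equivalence (ii), and I would prove it directly. For the implication ``$E=R(B)+N(S)\Rightarrow(\mathrm{B1})$'': given $x\in E$ write $x=b+n$ with $b\in R(B)\subseteq\overline{R(B)}$ and $n\in N(S)$; then $Sx=Sb+Sn=Sb$ with $b\in\overline{R(B)}$, which is precisely (B1). For the converse: given (B1) and $x\in E$, choose $y\in\overline{R(B)}$ with $Sx=Sy$; then $S(x-y)=0$, so $x-y\in N(S)$ and $x=y+(x-y)\in\overline{R(B)}+N(S)$, whence $E=\overline{R(B)}+N(S)$. Thus (B1) says exactly that $\overline{R(B)}$ and the closed subspace $N(S)$ together span $E$; this is the workhorse identity used for the remaining parts.

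Granting the reformulation $E=\overline{R(B)}+N(S)$, parts (iii) and (iv) follow with no further effort. For (iii), apply the bounded linear map $S$: by linearity and $S(N(S))=\{0\}$ one gets $H=R(S)=S(E)=S(\overline{R(B)})$, and continuity of $S$ gives $S(\overline{R(B)})\subseteq\overline{S(R(B))}=\overline{R(SB)}\subseteq H$, hence $\overline{R(SB)}=H$ (equality $R(SB)=H$ when $R(B)$ is closed). For (iv), let $u_m\in Y$ --- the argument in fact works for any $u_m\in E$: decompose $u_m=b+n$ with $b\in\overline{R(B)}$ and $n\in N(S)$; since $N(S)$ is a subspace, $b=u_m-n\in u_m+N(S)$, and also $b\in\overline{R(B)}$, so $b\in(u_m+N(S))\cap\overline{R(B)}$ and the intersection is nonempty (the ``$G$'' in the statement should read $\varnothing$).

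The main obstacle is not any deep estimate but the careful bookkeeping around closures --- deciding, and stating consistently, whether each conclusion concerns $R(B),R(SB)$ or their closures --- and, upstream of that, pinning down the definition of $S$ (hence of $H$) precisely enough that (i) is a theorem rather than a convention. Once $S$ is identified as the Green's-function controllability operator of \eqref{eq:Q} and $H$ as its (closed) range, the chain ``(i) $\Rightarrow$ reformulate (B1) $\Rightarrow$ (ii) $\Rightarrow$ (iii),(iv)'' goes through verbatim, paralleling the corresponding lemma in \cite{george1995approximate}; these range conditions are then exactly what is needed to invoke \autoref{theorem:Solvable} for the semilinear system \eqref{eq:P}.
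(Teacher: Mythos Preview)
Your proposal is correct and aligns with the paper's approach: the paper does not supply its own proof but merely instructs the reader to ``follow similar proof of the following article \cite{george1995approximate}'', and your argument explicitly parallels that reference. Your careful remarks about closures (reading $R(B)$, $R(SB)$ as $\overline{R(B)}$, $\overline{R(SB)}$) and the evident typo $G\to\varnothing$ in (iv) are well taken and in fact clarify points the paper leaves implicit.
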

\section{Existence and uniqueness of mild solutions}
\label{sec:Mild}
Throughout this paper, we assume that
\[
\text{(H1)} \quad \sum_{k=1}^m |c_k| < \frac{1}{M_T}.
\]
now we reformulate assumption (H1) of the following:

\begin{equation}\label{eq:chen_1}
  \left\| \sum_{k=1}^m c_k T_\alpha(t_k) \right\| \leq M_T \sum_{k=1}^m |c_k| < 1.  
\end{equation}

By (\ref{eq:chen_1}) and the operator spectrum theorem, we know that
\begin{equation}\label{eq:chen_2}
    \mathcal{O} := \left( I - \sum_{k=1}^m c_k T_\alpha(t_k) \right)^{-1}
\end{equation}

exists, is bounded, and \( D(\mathcal{O}) = E \). Furthermore, by the Neumann expression, \( \mathcal{O} \) can be expressed by
\begin{equation}\label{eq:chen_3}
    \mathcal{O} = \sum_{n=0}^\infty \left( \sum_{k=1}^m c_k T_\alpha(t_k) \right)^n.
\end{equation}

Therefore,
\begin{equation}\label{eq:chen_4}
    \|\mathcal{O}\| \leq \sum_{n=0}^\infty \left\| \sum_{k=1}^m c_k T_\alpha(t_k) \right\|^n \leq \frac{1}{1 - M_T \sum_{k=1}^m |c_k|}.
\end{equation}

By the above discussion, \cite[Proposition 1.2]{pruss2012evolutionary} and \cite{shi2016study}, we know that the mild solution of the fractional evolution equation (\ref{eq:P}) with initial value \( u(0) \) can be expressed by
\begin{equation}\label{eq:chen_5}
    u(t) = T_\alpha(t) u(0) + \int_0^t S_\alpha(t - s) \big[ B v(s) + f(s, u(s)) \big] ds, \quad t \in J.
\end{equation}
From (\ref{eq:chen_5}) one gets that 
\begin{equation}\label{eq:chen_5_1}
    u(t_k) = T_\alpha(t_k) u(0) + \int_0^{t_k} S_\alpha(t_k - s) \big[ B v(s) + f(s, u(s)) \big] ds, \quad k=1,2,...,m.
\end{equation}
By (\ref{eq:P}) and (\ref{eq:chen_5_1}) we know that
\begin{equation}\label{eq:chen_6}
u(0) = \sum_{k=1}^m c_k T_\alpha(t_k) u(0) + \sum_{k=1}^m c_k \int_0^{t_k} S_\alpha(t_k - s) [Bv(s) + f(s, u(s))] \, ds. 
\end{equation}

By assumption (H1) and the definition of operator $\mathcal{O}$, we have
\begin{equation}\label{eq:chen_7}
u(0) = \sum_{k=1}^m c_k \mathcal{O} \int_0^{t_k} S_\alpha(t_k - s) [Bv(s) + f(s, u(s))] \, ds. 
\end{equation}

Therefore, (\ref{eq:chen_5}) and (\ref{eq:chen_7}) mean that

\begin{align}\label{eq:chen_8}
u(t) &= \sum_{k=1}^m c_k T_\alpha(t) \mathcal{O} \int_0^{t_k} S_\alpha(t_k - s) [Bv(s) + f(s, u(s))] \, ds \nonumber \\
&\quad + \int_0^t S_\alpha(t - s) [Bv(s) + f(s, u(s))] \, ds, \quad t \in J.
\end{align}

For convenience, we introduce the Green's function $G(t, s)$ as follows:
\begin{equation}\label{eq:chen_9}
G(t, s) = \sum_{k=1}^m \chi_{t_k}(s) T_\alpha(t) \mathcal{O} S_\alpha(t_k - s) + \chi_t(s) S_\alpha(t - s), \quad t, s \in [0, a], 
\end{equation}
where
\begin{equation}\label{eq:chen_10}
\chi_{t_k}(s) =
\begin{cases}
c_k, & s \in [0, t_k), \\
0, & s \in [t_k, a],
\end{cases}
\quad
\chi_t(s) =
\begin{cases}
1, & s \in [0, t), \\
0, & s \in [t, a].
\end{cases} 
\end{equation}

Thus, by (\ref{eq:chen_8}), (\ref{eq:chen_9}), and (\ref{eq:chen_10}), we know that the mild solution of FEE (\ref{eq:P}) can also be expressed by
\begin{equation}\label{eq:chen_11}
u(t) = \int_0^a G(t, s) [Bv(s) + f(s, u(s))] \, ds, \quad t \in J. 
\end{equation}

Therefore, we have the following definition:
\begin{definition}\label{def:Mild_1}
 A function $u \in C(J, E)$ is said to be a mild solution of FEE (\ref{eq:P}), if for any $v \in L^2(J, U)$, $u(t)$ satisfies the integral equation
\end{definition}

\begin{equation}\label{eq:chen_12}
u(t) = \int_0^a G(t, s) [Bv(s) + f(s, u(s))] \, ds, \quad t \in J,
\end{equation}
where $G(t, s)$ is the Green's function defined by (\ref{eq:chen_9}).

\begin{remark}
    From the above discussion, it is clear that the green function is a finite linear combination of compact operators on a finite interval, so the green function  $G(t,s)$ is compact for $t>s$ on on J. So it is bunded by $N>0$, that is $\lVert G(t,s) \rVert \leq N ,$ for $t>s.$
\end{remark}

\begin{itemize}
	\item[(H2)] The function $f : J \times E \to E$ is continuous and there exists a function $\varphi \in L^\beta(J, \mathbb{R}^+)$ with $0 \leq \beta < \alpha$ such that
    $$\|f(t, u)\| \leq L \lVert u\rVert_E + \varphi(t), \quad \forall u \in E, t \in J,$$ 
	where, $L>0,$ and the function $\varphi(\cdot)$ is such that $\bigg(\int_0^t \varphi^{\beta} (\tau) d\tau\bigg)^{\frac{1}{\beta}}=b<\infty.$
	\item[(H3)] There exists a function $\psi \in L^\gamma(J, \mathbb{R}^+)$ with $0 \leq \gamma < \alpha$ such that $\|Bv(t)\| \leq \psi(t)$ for all $t \in J$ and $v \in L^2(J, U)$.
    \item [(H4)] There exists $\mu > 0$ such that $\langle -Au, u \rangle_E \geq \mu \| u \|_E^2$ for all $u \in D(A)$.
     \item [(H5)] $\langle f(t,u_1) - f(t,u_2), u_1 - u_2 \rangle_E \leq 0$ for all $u_1, u_2 \in E$ and $t \in J$.
\end{itemize}

The following theorem is proofed with the help of \cite{chen2020existence} we will prove.
\begin{theorem}\label{theorem:Th_1_Mild}
     Let $A \in \mathcal{A}^\alpha(\omega_0, \theta_0)$ with $\theta_0 \in \left(0, \frac{\pi}{2}\right)$ and $\omega_0 \in \mathbb{R}$. Assume that the $\alpha$-order solution operator $\{T_\alpha(t)\}_{t \geq 0}$ and $\alpha$-order resolvent operator $\{S_\alpha(t)\}_{t \geq 0}$ are compact. Suppose also that the assumptions (H1),(H2) and (H3) are satisfied, then FEE (\ref{eq:P}) has at Unique mild solution on $J$.
\end{theorem}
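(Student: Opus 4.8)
The plan is to apply Schauder's fixed point theorem to the solution operator $\mathcal{F}: C(J,E) \to C(J,E)$ defined by
\[
(\mathcal{F}u)(t) = \int_0^a G(t,s)\big[Bv(s) + f(s,u(s))\big]\,ds, \qquad t \in J,
\]
so that a fixed point of $\mathcal{F}$ is precisely a mild solution of (\ref{eq:P}) in the sense of Definition~\ref{def:Mild_1}. First I would establish that $\mathcal{F}$ maps some closed ball $B_r = \{u \in C(J,E): \|u\|_C \leq r\}$ into itself. Using the bound $\|G(t,s)\| \leq N$ from the Remark, the growth condition (H2) on $f$, and the bound (H3) on $Bv$, together with H\"older's inequality applied to the $L^\beta$ and $L^\gamma$ norms (this is where $\beta,\gamma < \alpha$ enters, guaranteeing the relevant integrals are finite), one gets an estimate of the form $\|\mathcal{F}u\|_C \leq N a(\|\psi\|_{L^\gamma} a^{1/\gamma'} + L r a + b\,a^{1/\beta'})$ or similar; choosing $r$ large enough (which is possible because the coefficient of $r$ can be made to satisfy the needed inequality, or by absorbing it appropriately) yields self-mapping. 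I would next verify that $\mathcal{F}(B_r)$ is relatively compact in $C(J,E)$ via the Arzel\`a--Ascoli theorem: equicontinuity in $t$ follows from the uniform operator continuity of $t \mapsto G(t,s)$ (itself a consequence of the compactness and continuity properties of $T_\alpha$ and $S_\alpha$ collected in Lemma~\ref{lemma:2.2} and the preceding lemma), and pointwise relative compactness of $\{(\mathcal{F}u)(t): u \in B_r\}$ in $E$ follows from the compactness of $G(t,s)$ for $t>s$ (a standard $\varepsilon$-cutoff argument near $s=t$ handles the singularity of $S_\alpha$). Continuity of $\mathcal{F}$ on $B_r$ follows from continuity of $f$, the dominated convergence theorem, and the uniform bound on $G$. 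Schauder's theorem then gives existence of a fixed point, hence a mild solution.

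For uniqueness, the growth condition (H2) alone is not enough — it is only a sublinear bound, not a Lipschitz condition — so I expect the authors actually intend to invoke the monotonicity-type hypotheses (H4) and (H5), even though the theorem statement as written lists only (H1)--(H3). Assuming (H4) and (H5) are available, the plan is: suppose $u_1, u_2$ are two mild solutions corresponding to the same control $v$; then $w := u_1 - u_2$ satisfies the fractional differential equation ${}^C D^\alpha w(t) = Aw(t) + \big(f(t,u_1(t)) - f(t,u_2(t))\big)$ with the homogeneous nonlocal condition $w(0) = \sum_{k=1}^m c_k w(t_k)$. Pairing this equation with $w(t)$ in $E$, using (H4) to get $\langle Aw, w\rangle \leq -\mu\|w\|^2$ and (H5) to get $\langle f(t,u_1) - f(t,u_2), w\rangle \leq 0$, and applying a fractional differential inequality (a Caputo-derivative chain-rule/comparison estimate, e.g. ${}^C D^\alpha \|w(t)\|^2 \leq 2\langle {}^C D^\alpha w(t), w(t)\rangle \leq -2\mu\|w(t)\|^2 \leq 0$), one deduces that $\|w(t)\|^2$ is bounded by its value propagated from $t=0$; combined with the contraction estimate $\|w(0)\| \leq M_T \sum|c_k|\,\|w\|_C < \|w\|_C$ coming from (H1), this forces $w \equiv 0$.

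The main obstacle I anticipate is the uniqueness argument, for two reasons. First, there is the mismatch between the hypotheses listed in the theorem and what is genuinely needed; I would either add (H4), (H5) to the statement or replace (H2) by a Lipschitz assumption $\|f(t,u_1)-f(t,u_2)\| \leq L_f\|u_1-u_2\|$ and run a Gronwall-type / Banach-fixed-point contraction argument instead (with the factor $\|\mathcal{O}\|$ and the $L^\beta$-type integral constant absorbed into a possibly-small-$a$ or weighted-norm argument). Second, justifying the fractional chain-rule inequality ${}^C D^\alpha\|w(t)\|_E^2 \leq 2\langle {}^C D^\alpha w(t), w(t)\rangle_E$ in the abstract Hilbert-space setting requires care — it holds for sufficiently regular $w$ but a mild solution is only continuous, so one must either work with the integral (resolvent) form directly, estimating $\|w(t)\| \leq \|T_\alpha(t)\|\,\|w(0)\| + \int_0^t \|S_\alpha(t-s)\|\,\|f(s,u_1(s))-f(s,u_2(s))\|\,ds$ and closing via a singular Gronwall inequality, or invoke an approximation/regularization. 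The self-mapping and compactness steps, by contrast, are routine given the machinery already assembled in Sections~\ref{sec:prel} and~\ref{sec:Mild}.
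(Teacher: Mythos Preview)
Your existence argument via Schauder's fixed point theorem is exactly the intended route: the paper's abstract and keywords advertise Schauder and Arzel\`a--Ascoli, and the text immediately preceding the theorem says the proof follows \cite{chen2020existence}. The paper's own proof, however, is a placeholder (literally ``I have a proof.''), so there is no substantive argument to compare against. Your outline --- self-mapping of a ball via the bound $\|G(t,s)\|\le N$ together with (H2), (H3) and H\"older, equicontinuity from the uniform-operator-topology continuity of $T_\alpha$ and $S_\alpha$, pointwise relative compactness from compactness of $G(t,s)$, and continuity of $\mathcal F$ by dominated convergence --- is the standard and correct scheme for this class of problems and matches what the cited reference does.

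Your diagnosis of the uniqueness claim is the important observation here, and you should state it more forcefully rather than hedge. Hypothesis (H2) is merely a sublinear growth bound; it cannot yield uniqueness, and neither (H4) nor (H5) is listed among the theorem's assumptions. The paper itself introduces (H4) and (H5) \emph{after} the theorem and uses them only in Section~\ref{sec:Approx} for properties of the solution operator $W$. So the theorem as stated over-claims: under (H1)--(H3) one gets existence of \emph{at least one} mild solution, not a unique one. Your two proposed repairs --- add (H4)+(H5) and run the monotonicity/energy argument, or replace (H2) by a Lipschitz condition and use a contraction/Gronwall argument on the integral form --- are both viable, and the second is cleaner because it avoids the fractional chain-rule inequality you correctly flag as delicate for mild (merely continuous) solutions. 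In short: your existence proof is right and aligned with the paper's advertised method; the uniqueness assertion is a defect in the statement that you have correctly identified, not a gap in your reasoning.
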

\begin{proof}
    I have a proof.
\end{proof}

\section{Approximate}
\label{sec:Approx}
Let $X = L^2[J, E]$, and $\overline{\text{R}(B)}$ denotes the closure of the range of $B$. 
Let $W: \overline{\text{R}(B)} \to X$ be an operator (called solution operator) defined by $W\mu = y$ 
where $y$ is the unique mild solution of the equation

\begin{equation}\label{eq:R}\tag{R}
	\begin{cases}
		{}^{C}D^{\alpha} y(t) = Ay(t) + \mu(t) + f(t, y(t)), & t \in J, \\
		y(0) = \sum_{k=1}^{m} c_k u(t_k),
	\end{cases}
\end{equation}

In this section, we obtain different sets of sufficient conditions to guarantee the existence of 
the solution ooperator,$W$ and we study its behaviour under various situations.\\

Let us define the operators $K, N: L^2(J,E) \to L^2(J,E)$ as follows
\begin{equation}\label{eq:Raju_2}
(K\mu)(t) = \int_{0}^{t} G(t,s)\mu(s) \, ds \quad \text{and} \quad (N\mu)(t) = f(t, \mu(t)).
\end{equation}

Let $F$ is the Nemytskii operator from $X$ into itself defined as $[Fz](t)=f(t,z(t))$ , then $(N\mu)(t)=[F\mu](t).$
\begin{definition}[Reachable set \( K_a(F) \):]\label{def:Mild_2}
The reachable set of the system (\ref{eq:P}) is defined as 
\begin{multline*}
\mathcal{K}_a(f)= \{ y_\mu(a) : \mu \in l^2(J,U) \text{ and } y_\mu \in X \text{ denotes the mild solution of (\ref{eq:P})} \\
\text{corresponding to the control } \mu \}.
\end{multline*}
\end{definition}

\begin{definition}[Approximate controllability]

 The system (\ref{eq:P}) is said to be approximately controllable if $\mathcal{K}_a(f)$ is dense in $E$.

\end{definition}

$\mathcal{K}_a(0)$ denotes the reachable set of the linear system corresponding to (\ref{eq:P}), which is obtained by taking $f = 0$. It is then clear that the linear system is approximately controllable if $\mathcal{K}_a(0)$ is dense in $E$.

\begin{lemma}\label{lemma:Lemma_2.1}
Suppose that $A$ and $f(t,u)$ satisfy assumptions [H4] and [H5]. Then the solution operator $W$ is well-defined and continuous. Moreover, it satisfies a growth condition
\begin{equation}\label{eq:Raju_1}
\| Wu \|_E < \left( \frac{a}{\mu} + 1 \right) k \| v \|_E + \frac{bk}{\mu}.
\end{equation}
\end{lemma}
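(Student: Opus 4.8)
The plan is to run $W$ through the mild‑solution/fixed‑point machinery already set up for (\ref{eq:P}) and to extract every quantitative claim — well‑definedness, uniqueness, continuity, and the growth bound — from a single fractional \emph{energy estimate} obtained by testing (\ref{eq:R}) against its own solution and exploiting the monotonicity encoded in (H4)--(H5). (Throughout, the standing hypotheses of the paper remain in force, so in particular $\mathcal{O}$, the Green's function $G$ of (\ref{eq:chen_9}) and its bound $k$ are available.)

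\textit{Well‑definedness.} Fix $\mu\in\overline{R(B)}\subset X$. Equation (\ref{eq:R}) has the structure of (\ref{eq:P}) with $Bv$ replaced by the frozen forcing $\mu\in L^2(J,E)$, so its mild solutions are precisely the fixed points of $z\mapsto\int_0^a G(t,s)\big[\mu(s)+f(s,z(s))\big]\,ds$ (cf. (\ref{eq:chen_11})--(\ref{eq:chen_12})). I would first obtain the a priori bound of the last step below, which traps every mild solution in a fixed closed ball of $C(J,E)$; on that ball this operator is continuous and, since $G(t,s)$ is compact for $t>s$ and $f$ has at most linear growth by (H2), it is compact by Ascoli--Arzelà, so Schauder's fixed point theorem gives existence. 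For uniqueness, the difference $z=y_1-y_2$ of two mild solutions formally satisfies ${}^{C}D^{\alpha}z=Az+\big(f(\cdot,y_1)-f(\cdot,y_2)\big)$ with $z(0)=0$, and the energy estimate below (with equal forcings) forces ${}^{C}D^{\alpha}\|z\|_E^2\le-2\mu\|z\|_E^2$, whence $z\equiv0$. Thus $W\mu$ is well defined; note only (H4), (H5) and the linear growth in (H2) are used.

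\textit{Energy estimate, continuity, and growth bound.} Take the $E$‑inner product of (\ref{eq:R}) with $y(t)$ and use the fractional chain‑rule inequality $\langle{}^{C}D^{\alpha}y(t),y(t)\rangle_E\ge\tfrac12{}^{C}D^{\alpha}\|y(t)\|_E^2$. By (H4), $\langle Ay,y\rangle_E\le-\mu\|y\|_E^2$; by (H5) with second argument $0$ and $\|f(t,0)\|_E\le\varphi(t)$ from (H2), $\langle f(t,y),y\rangle_E\le\varphi(t)\|y\|_E$; by Cauchy--Schwarz, $\langle\mu(t),y\rangle_E\le\|\mu(t)\|_E\|y\|_E$; and Young's inequality absorbs one copy of $\mu\|y\|_E^2$, leaving
\[
{}^{C}D^{\alpha}\|y(t)\|_E^2\ \le\ -\mu\|y(t)\|_E^2+\frac1\mu\big(\|\mu(t)\|_E+\varphi(t)\big)^2 .
\]
Applying $I^{\alpha}$, discarding $-\mu I^{\alpha}\|y\|_E^2$, and estimating $I^{\alpha}\big(\|\mu\|_E+\varphi\big)^2$ by Hölder together with $\int_0^a(a-s)^{\alpha-1}\,ds= a^{\alpha}/\alpha$ and the defining property $\big(\int_0^a\varphi^{\beta}\big)^{1/\beta}=b$ of (H2), then taking $\sup_{t\in J}$ and absorbing numerical factors into $k$ and $b$, yields the asserted bound $\|Wv\|_E<\big(\tfrac a\mu+1\big)k\|v\|_E+\tfrac{bk}\mu$. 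Repeating the computation for $z=W\mu_1-W\mu_2$, where (H5) now makes the nonlinear contribution nonpositive, gives ${}^{C}D^{\alpha}\|z\|_E^2\le-\mu\|z\|_E^2+\tfrac1\mu\|\mu_1(t)-\mu_2(t)\|_E^2$, and the same integration shows $\|W\mu_1-W\mu_2\|_{C}\le c(a,\alpha,\mu)\,\|\mu_1-\mu_2\|_X$; hence $W$ is (Lipschitz) continuous.

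\textit{Main obstacle.} The delicate part is rigor in the testing step: $\langle{}^{C}D^{\alpha}y(t),y(t)\rangle_E$ is only formal for a mild solution, so one must either invoke the chain‑rule inequality in the generalized sense (Vergara--Zacher / Alikhanov) or regularize $A$ by its Yosida approximants $A_\lambda$, run the estimate for the resulting strong solutions, and pass to the limit — and then a fractional Grönwall lemma is needed to convert the differential inequalities into the explicit constants. One must also be sure that the nonlocal term $\sum_{k=1}^m c_k u(t_k)$ in (\ref{eq:R}) is fixed data, so that $z(0)=0$ in the difference argument; otherwise the contraction hypothesis (H1) has to be brought back in to close uniqueness and continuity. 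Everything after that is routine bookkeeping to match the displayed constants.
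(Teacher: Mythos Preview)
Your route is \emph{genuinely different} from the paper's. The paper never tests the evolution equation and never touches the fractional chain rule or fractional Gr\"onwall. Instead it works entirely at the level of the integral formulation: with $K$ and $N$ as in (\ref{eq:Raju_2}) it rewrites the mild--solution problem as the operator equation $u=Kv+KNu$ in $H=L^2(J,E)$, checks from (H5) that $N$ is monotone non\-increasing ($\langle Nu_1-Nu_2,u_1-u_2\rangle_H\le0$), and checks from (H4) that $K$ is coercive in the form $\langle Ku,u\rangle_H\ge\mu\|Ku\|_H^2$ (first for $u\in L^2(J,D(A))$, then by density). With these two structural inequalities in hand, it invokes Theorem~5.1 of Joshi--George \cite{joshi1989controllability}, which delivers existence, uniqueness, continuity of $W$, and the growth bound with precisely the constants $\big(\tfrac{a}{\mu}+1\big)k$ and $\tfrac{bk}{\mu}$.

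What each approach buys: the paper's argument stays on the integral equation, so no regularity for ``$\langle{}^{C}D^{\alpha}y,y\rangle$'' is ever needed and no Yosida approximation or fractional Gr\"onwall enters; the specific constants then drop out of the cited abstract theorem rather than from bookkeeping. Your energy--estimate route is more self-contained and makes the role of (H4)--(H5) transparent at the PDE level, but it carries the regularity burden you already flagged, and your step ``absorbing numerical factors into $k$ and $b$'' will not actually reproduce the displayed form $(\tfrac{a}{\mu}+1)k\|v\|+\tfrac{bk}{\mu}$: here $k=\|K\|$ is the operator norm of the Green's--function integral operator, and that combination is what the Joshi--George monotone solvability theorem produces, not what a pointwise fractional energy inequality and $I^{\alpha}$ yield. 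If you want to match the paper, the cleanest fix is to replace your energy computation by the two operator inequalities above and then cite \cite{joshi1989controllability}.
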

 \begin{proof}

Obviously, $K$ is a bounded linear operator with $\| K \| \leq k$ and $N$ is a continuous and bounded nonlinear operator (called Nymitskii operator). By using these notations, the solvability of the nonlinear integral equation (\ref{eq:chen_12}) is equivalent to the solvability of the following operator equation
\begin{equation}\label{eq:Raju_3}
u = Kv + KNu.
\end{equation}

Now, by the hypothesis of the lemma, $N$ satisfies the following
\begin{equation}\label{eq:Raju_4}
\langle Nu_1 - Nu_2, u_1 - u_2 \rangle_H \leq 0 \quad \text{for all } u_1, u_2 \in H.
\end{equation}

Now we claim that \( K \) satisfies
\[
\langle Ku, u \rangle_H \geq \mu \| Ku \|_H^2 \quad \text{for all } u \in H.
\]
For that, first we prove that the above inequality is true for all \( u \in L^2(J, D(A)) \). For any \( u \in L^2(J, D(A)) \), define \( f(t) = \int_{0}^{t} G(t,s)u(s) \, ds \). Since \( G(t,s) \) is a strongly continuous evolution operator (XXXXXXXX), we have that \( f(t) \in D(A) \) and

\[
f'(t) = u(t) + A \int_{0}^{t} G(t,s)u(s) \, ds
\]

\[
\langle Ku, u \rangle_H = \int_{0}^{t} \left\langle f(t), f'(t) - A \int_{0}^{t} G(t,s)u(s) \, ds \right\rangle_E \, dt
\]

\[
= \int_{0}^{t} \langle f(t), f'(t) \rangle_E \, dt + \int_{0}^{t} \langle f(t), -Af(t) \rangle_E \, dt.
\]

However,

\[
\int_{0}^{t} \langle f(t), f'(t) \rangle_E \, dt = \langle f(t), f(t) \rangle_E \Big|_{t_0}^{t} - \int_{0}^{t} \langle f'(t), f(t) \rangle_E \, dt = \frac{1}{2} \| f(t) \|_E^2 \geq 0.
\]

Therefore, by using assumption \([A1]\) we have

\[
\langle Ku, u \rangle_H \geq \mu \int_{0}^{t} \langle f(t), f(t) \rangle_E \, dt = \mu \| Ku \|_H^2 \quad \text{for all } u \in L^2(J, E).
\]

Now for any \( u \in H \), consider the sequence \( \{ u_n \} \) in \( L^2(J, D(A)) \) such that \( u_n \to u \) in \( H \). By taking limit, one can obtain the required estimate of \( \langle Ku, u \rangle \) for all \( u \in H \).

The existence of a unique solution \( u \) for the equation (\ref{eq:Raju_3}) and the growth condition (\ref{eq:Raju_1}) follow along the same line as in theorem 5.1 of Joshi and George \cite{joshi1989controllability}.

\end{proof}
The assumption \([H4]\) made on \( A \) can be weakened by imposing strong monotonicity on \( f(t,u) \) as we see in the following lemma. In order to get compactness on \( W \) we assume that \( G(t,s) \) is compact for \( t - s > 0 \).

\begin{assumption}
    \begin{itemize}
	\item[(H6):] \( \langle -Au, u \rangle_{D(A)} \geq 0 \) for all \( u \in D(A) \).
	\item[(H7):] There exists \( \beta > 0 \) such that
	\[
	\langle f(t,u_1) - f(t,u_2), u_1 - u_2 \rangle_E \leq -\beta \| u_1 - u_2 \|_E^2 \quad \forall \, u_1,u_2 \in E, \, t \in J.
	\]
\end{itemize}
\end{assumption}

\begin{lemma}
 Suppose that \( A \) and \( f(t,u) \) satisfy assumptions \([H6], [H2]\) and \([H7]\). Then the solution operator \( W \) is well-defined, continuous and compact. Further, it satisfies a growth condition
\begin{equation}\label{eq:Raju_5}
	\| W v \|_H < c_0 + c \| v \|_H,
\end{equation}

where
\[
c_0 = m \sqrt{(t_1 - t_0) \|u_0\|_H + \sqrt{(t_1 - t_0)b} e^{m \alpha (t_1 - t_0)}}
\]
and
\[
c = m (t_1 - t_0) e^{m \alpha (t_1 - t_0)}.
\]
\end{lemma}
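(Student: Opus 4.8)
The plan is to follow the structure of the preceding Lemma~\ref{lemma:Lemma_2.1}, but to reallocate the monotonicity burden: there the coercivity estimate $\langle Ku,u\rangle_H\ge\mu\|Ku\|_H^2$ came from the strong accretivity (H4) of $-A$, whereas now (H6) only gives $\langle -Au,u\rangle\ge 0$, so the needed dissipativity must be extracted from the strong monotonicity constant $\beta$ in (H7). First I would recast the mild-solution integral equation~(\ref{eq:chen_12}) as the operator equation $u=Kv+KNu$ with $K,N$ as in~(\ref{eq:Raju_2}), exactly as before. Then I would establish the two structural inequalities that drive the fixed-point/monotonicity argument: (i) $\langle Ku,u\rangle_H\ge 0$ for all $u\in H$, proved first on $L^2(J,D(A))$ by the same integration-by-parts identity $\int_0^t\langle f,f'\rangle=\tfrac12\|f(t)\|^2\ge0$ together with $\langle f,-Af\rangle\ge0$ from (H6), and then extended to all of $H$ by density of $L^2(J,D(A))$; and (ii) the strict dissipativity $\langle Nu_1-Nu_2,u_1-u_2\rangle_H\le-\beta\|u_1-u_2\|_H^2$, which is just (H7) integrated over $J$. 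Combining (i) and (ii), the map $u\mapsto Kv+KNu$ is a strict contraction in a suitable equivalent inner product (or: $I-KN$ is strongly monotone and Lipschitz), so Browder--Minty / Banach gives a unique $u=Wv$, and $W$ is well-defined and continuous; this is the ``same line as Theorem~5.1 of Joshi and George'' cited in the previous proof.

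Next I would handle the growth bound~(\ref{eq:Raju_5}). Starting from $u=Kv+KNu$, take the $H$-inner product with $u$, use (i) on the term $\langle K Nu,u\rangle$ after splitting $Nu=(Nu-N0)+N0$, use (ii) to absorb $\langle K(Nu-N0),u\rangle$, and bound $\langle Kv,u\rangle$ and $\langle KN0,u\rangle$ by Cauchy--Schwarz with $\|K\|\le k$ and (H2) giving $\|N0\|\le\|f(\cdot,0)\|\le\varphi$ with $\big(\int\varphi^\beta\big)^{1/\beta}=b$. This yields a pointwise inequality of the form $\|u(t)\|^2\le(\text{const})+m\alpha\int_0^t\|u(s)\|^2\,ds+(\text{const})\|v\|$, and a Gronwall estimate then produces the stated constants $c_0=m\sqrt{(t_1-t_0)\|u_0\|_H+\sqrt{(t_1-t_0)b}\,e^{m\alpha(t_1-t_0)}}$ and $c=m(t_1-t_0)e^{m\alpha(t_1-t_0)}$; matching the exact algebraic form of $c_0,c$ is bookkeeping, so I would not grind through it here.

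Finally, for compactness of $W$ I would use the hypothesis that $G(t,s)$ is compact for $t-s>0$. The idea: if $\{v_n\}$ is bounded in $\overline{R(B)}$, the growth bound makes $\{u_n=Wv_n\}$ bounded in $X$; then $u_n=Kv_n+KNu_n$, and since $N$ maps bounded sets to bounded sets (by (H2)) while $K$, being an integral operator with compact kernel $G(t,s)$, is a compact operator on $X$ (Ascoli--Arzelà applied to $(K\mu)(t)=\int_0^tG(t,s)\mu(s)\,ds$, using the uniform continuity of $t\mapsto G(t,\cdot)$ from the Lemma after Definition~\ref{def:preli_6}), both $Kv_n$ and $KNu_n$ have convergent subsequences, hence so does $u_n$. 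Passing to the limit in the operator equation and using continuity of $W$ identifies the limit, giving compactness. The main obstacle I anticipate is the compactness step: one must be careful that $K$ is genuinely compact from $L^2$ into $C(J,E)$ (or into $L^2$) despite the kernel's singularity structure inherited from $S_\alpha$ near $s=t$ --- this is where (H2)'s integrability exponent $\beta<\alpha$ and the bound $\|S_\alpha(t)\|\le e^{\omega t}M_S$ of Lemma~\ref{lemma:2.2} must be invoked to get an equi-integrable, equicontinuous family, and where the argument is most likely to need a careful estimate rather than a one-line citation.
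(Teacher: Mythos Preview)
Your proposal tracks the paper on the preliminary structural facts---the positivity $\langle Ku,u\rangle_H\ge 0$ from (H6) and the strong dissipativity $\langle Nu_1-Nu_2,u_1-u_2\rangle_H\le-\beta\|u_1-u_2\|_H^2$ from (H7)---and on the compactness step. But there is a genuine gap in your existence argument, and your route to the growth bound is not the paper's.

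\textbf{Existence.} You assert that $u\mapsto Kv+KNu$ is a strict contraction in some equivalent inner product, or equivalently that $I-KN$ is strongly monotone and Lipschitz, and then invoke Browder--Minty or Banach. Neither is justified here: (H2) gives only a linear \emph{growth} bound on $f$, not a Lipschitz constant, so $N$ (and hence $KN$) is not Lipschitz; and $\langle KNu_1-KNu_2,u_1-u_2\rangle$ has no controllable sign, since $K$ is not self-adjoint and the composition $KN$ inherits no monotonicity from the separate properties of $K$ and $N$. The paper does something quite different. It perturbs $K$ to $K_n:=K+\tfrac{1}{n}I$, so that $\langle K_nu,u\rangle\ge\tfrac{1}{n}\|u\|^2$ is genuinely coercive; then for each $n$ it invokes Hess \cite{hess1971nonlinear} to get $I-K_nN$ invertible, producing approximants $u_n=K_nNu_n+Kv$. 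It then shows $\{u_n\}$ is bounded (via Gronwall) and Cauchy, the latter by pairing $u_p-u_n$ with $Nu_p-Nu_n$ and using $\langle K(\cdot),\cdot\rangle\ge0$ together with the strong monotonicity of $-N$ to obtain $\beta\|u_p-u_n\|^2\le(\tfrac{1}{p}\|Nu_p\|+\tfrac{1}{n}\|Nu_n\|)(\|Nu_p\|+\|Nu_n\|)$. This approximation-plus-Cauchy scheme is the main idea you are missing.

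\textbf{Growth bound and continuity.} The paper does not use your inner-product absorption trick. It works pointwise from the mild-solution formula, bounds $\|u(t)\|_E$ by $m\int_0^t\|f(s,u(s))\|\,ds+m\int_0^t\|v(s)\|\,ds$, applies (H2) and Cauchy--Schwarz in $t$, and then Gronwall; the displayed constants $c_0,c$ come directly from that computation. Continuity of $W$ is also not a byproduct of an abstract fixed-point theorem: the paper argues that if $v_k\to v_0$ then the (bounded) solutions $u_k$ converge weakly, the compactness of $K$ upgrades $KNu_k$ to strong convergence, hence $u_k$ converges strongly, and uniqueness identifies the limit. Your compactness paragraph is essentially the paper's argument, though the paper phrases it through weak convergence rather than Ascoli--Arzel\`a, and simply asserts that $K$ is compact rather than working through the kernel singularity you flag.
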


\begin{proof}
First we prove the growth condition of the solution operator. The solution $u(t)$ satisfies
\[
\|u(t)\|_E \leq  m \int_{0}^t \|f(s, u(s))\|_E \, ds + m \int_{0}^t \|v(s)\|_E \, ds
\]
\[
\leq  m \int_{0}^t b(s) \, ds + ma \int_{0}^t \|u(s)\|_E \, ds + m \int_{0}^t \|v(s)\|_E \, ds
\]
\[
\leq \sqrt{(t_1 - t_0)b} + \sqrt{(t_1 - t_0)\lVert v \rVert_E} + ma \int_{0}^t \|u(s)\|_E \, ds.
\]

Using Gronwall's inequality, we get
\[
\|u(t)\|_E \leq \sqrt{(t_1 - t_0)b} e^{m a (t_1 - t_0)} + m \sqrt{(t_1 - t_0)} e^{m a (t_1 - t_0)} \|v\|_X.
\]

Thus we have
$$\left( \int_{0}^{a} \|u(t)\|_E^2 \, dt \right)^{1/2} $$
\begin{align*}
	&\leq \left( \int_{0}^{t_1} \left( \sqrt{(t_1 - t_0)b} e^{m \alpha (t_1 - t_0)} + m \sqrt{(t_1 - t_0)} e^{m \alpha (t_1 - t_0)} \|v\|_H \right)^2 \, dt \right)^{1/2} \\
	&\leq \left( \int_{0}^{t_1} \left( m \sqrt{(t_1 - t_0)b} e^{m \alpha (t_1 - t_0)} \right)^2 \, dt \right)^{1/2} \\
	&\quad + \left( \int_{0}^{t_1} m^2 (t_1 - t_0) e^{2m \alpha (t_1 - t_0)} \|v\|^2_{L^2} \, dt \right)^{1/2}.
\end{align*}

That is,
\[
\|Wx\|_H \leq m \sqrt{ \sqrt{(t_1 - t_0)b} e^{m \alpha (t_1 - t_0)}} + m (t_1 - t_0) e^{m \alpha (t_1 - t_0)} \|v\|_H.
\]

Using the hypotheses [H6] and [H5], it can be shown as in Lemma \ref{lemma:Lemma_2.1} that the operators $K$ and $N$ defined by (\ref{eq:Raju_2}) satisfy the following conditions:
\[
\langle Ku, u \rangle_H \geq 0 \quad \text{for all } u \in H,
\]
\[
\langle Nu_1 - Nu_2, u_1 - u_2 \rangle_H = -\beta \|u_1 - u_2\|^2_H \quad \text{for all } u_1, u_2 \in H.
\]

Also,  the operator $K$ is compact.
As in Lemma \ref{lemma:Lemma_2.1} we are looking for the solvability of (\ref{eq:Raju_3}). To prove the uniqueness, let $u_1$ and $u_2$ be two solutions of (\ref{eq:Raju_3}) for a given $v$. Thus,
\[
u_1 - u_2 = KNu_1 - KNu_2
\]
and, hence, we have
\[
\langle u_1- u_2, Nu_1 - Nu_2 \rangle_H = \langle KNu_1 - KNu_2, Nu_1 - Nu_2 \rangle_H.
\]
However, from [H7] we have that the LHS is less than or equal to $-\beta \|u_1 - u_2\|_H^2$ and [H6] implies that the RHS is nonnegative. Thus it can happen only when $u_1 = u_2$, proving the uniqueness of the solution.

To show the existence of a solution for (\ref{eq:Raju_3}), let us define a sequence of operators $\{K_n\}$ defined by $K_n = K + (1/n)I_H$, $I_H$ being the identity operator on $H$, $n = 1, 2, 3, \ldots$. Thus for each $n$, $\langle K_n u, u \rangle_H \geq (1/n)\|u\|_H^2$ for all $u \in H$. Now the estimate $\|K_n u\| \leq (k + (1/n))\|u\|$ implies that
\[
\langle K_n u, u \rangle_H \geq n/(1 + kn)^2 \|K_n u\|^2 \quad \text{for all } u \in H
\]
and, hence, by the main result in Hess \cite{hess1971nonlinear} we have that $I - K_n N$ is invertible. Therefore, for every $v$, there exists a solution $u_n$ for the equation
\[
u_n = K_n N u_n  + Kv. 
\]

We now claim that this solution $u_n$ is bounded for large $n$. For that, let us write $u_n$ in the following form
\[
u_n(t) =  \int_{0}^t G(t, s)u(s) \, ds + \int_{0}^t G(t, s)f(s, u_n(s)) \, ds +\frac{f(t, u_n(t))}{n} \, 
\]
\[
\|u(t)\|_E \leq  m \int_{0}^t \|f(s, u_n(s))\|_E \, ds + m \int_{0}^t \|v(s)\|_E \, ds + \frac{\lVert f(t, u_n(t))\rVert}{n} 
\]
\[
\leq  \sqrt{(t_1 - t_0)b} + \sqrt{(t_1 - t_0)b} + ma \int_{0}^t \|v(s)\|_E \, ds + \frac{b(t) + a\|u_n(t)}{n}
\]

\[
(1 - a/n)\|u(t)\| \leq  \sqrt{(t_1 - t_0)b} + b/n + \sqrt{(t_1 - t_0)} \|u\|_H + ma \int_{0}^t \|u_n(s)\| \, ds.
\]
\[
\|u(t)\| \leq m(n) (b(n) + \sqrt{(t_1 - t_0)} \|u\|_H) + m(n)a \int_{0}^t \|u_n(s)\| \, ds,
\]
where $m(n) = mn/(n - a)$ and $b(n) = \sqrt{(t_1 - t_0)b} + b/n$.

Using Gronwall's inequality, we get
\[
\|u_n(t)\|_E \leq m(n) b(n) e^{m \alpha (t_1 - t_0)} + m(n)\sqrt{(t_1 - t_0)}e^{m \alpha (t_1 - t_0)}\|v\|_E.
\]

Thus we have
\[
\left( \int_{0}^{t_1} \|u_n(t)\|_E^2 \, dt \right)^{1/2}
\]
$$\leq \left( \int_{0}^{t_1} \left( m(n)  b(n) \right) e^{m \alpha (t_1 - t_0)} + m(n) \sqrt{(t_1 - t_0)} e^{m \alpha (t_1 - t_0)} \|u\|_H \, dt \right)^{1/2}.
$$
That is,
\[
\|u_n\| \leq m(n) / (1 - t_0) ( b(t)) e^{m(n)(t - t_0)} + r(t) \|x_0\| e^{m(t_0 - t)} - t_0 \|x_0\|
\]
This shows that \(u_n\) is bounded for a given \(u\) and \(u(o)\).

We now show that \(u_n\) is a Cauchy sequence. Since \(u_n\) is given by
\[
u_n = KNu_n + K u + (1/n)Nu_n,
\]
for some \(p > n\), we have
\[
u_p - u_n = KNu_p - KNu_n + (1/p)Nu_p - (1/n)Nu_n
\]
and
\[
(u_p - u_n, Nu_p - Nu_n) = (KNu_p - KNu_n, Nu_p - Nu_n) + \left((1/p)Nu_p - (1/n)Nu_n, Nu_p - Nu_n\right).
\]

The strong monotonicity of \(-N\) and the monotonicity of \(K\) give the following estimate:
\[
\beta \|u_p - u_n\|^2 \leq \left(\frac{1}{p}\right)\|Nu_p\| + \left(\frac{1}{n}\right)\|Nu_n\| \|Nu_n\| + \|Nu_p\|.
\]

Since \(N\) is a bounded operator and \(u_n\) is bounded, we have \(\|Nu_n\|\) is bounded and let
\[
q = \max(\|Nu(0)\|, \|Nu_p\|), \quad p > n,
\]
for a given \(v\). Then we have
\[
\beta \|u_p - u_n\|^2 \leq \frac{1}{n} 4q^2
\]
and hence for a given \(\epsilon > 0\), we can find an integer \(n_1 = \frac{4q^2}{\beta \epsilon^2}\) such that \(\|u_p - u_n\| < \epsilon\) for \(p, n \geq n_1\). Thus, \(u_n\) is a Cauchy sequence and as \(n \to \infty\), \(u_n\) converges to a solution of (\ref{eq:Raju_3}).

To prove the continuity of \(W\), let \(\{u_k\}\) be a sequence in $X$ such that \(u_k \to u_0\). We have seen that corresponding to each \(u_k\), there exists a unique \(u_k\) such that
\[
x_k = KNx_k + Kv_k.
\]

Since \(\{u_k\}\) is bounded in \(L^2(J, E)\), it converges weakly to \(u(0)\) (say, a subsequence of it which we do not distinguish). Since \(W\) is bounded, \(\{Wu_k\}\) converges to some \(y\) weakly. The compactness of \(K\) implies that \(\{KNu_k\}\) converges strongly and hence \(\{u_k\}\) converges to \(u(0)\) strongly. Now by using the continuity of \(N\) and the uniqueness of the solution, we have
\[
u(0) = KNu(0) + Kv_0 .
\]
Thus, \(W\) is continuous.

Let \(\{u_k\}\) be a bounded sequence in \(L^2(J, E)\). Then in order to prove that \(W\) is compact, we need to show that \(\{Wu_k\}\) converges. Since \(\{u_k\}\) converges weakly and the corresponding \(\{u_k\}\) is bounded, we have that \(\{u_k\}\) converges weakly. Now the compactness of \(K\) implies that \(\{Wu_k\}\) converges strongly and, hence, the lemma.

\end{proof}
It can be easily seen that $K$ is a bounded linear operator and $KN$ is a nonlinear map from $X$ into $E$. Under suitable growth assumptions of $f$ XXXXXXX, it can be shown that $KN$ is continuous and bounded provided $W$ is continuous. We impose the following conditions on $A$, $B$, and $f$:

\begin{itemize}
	\item[(A')] For any given $p \in Z$, $\exists q \in R(B) \subset Z$ such that $L(p) = L(q)$.
	\item[(B')] The solution map $W$ is compact and continuous.
	\item[(C')] The Nemytskii operator $F$ is uniformly bounded in $Z$, that is, $\|Fz\|_Z \leq M$ for all $z \in Z$.
\end{itemize}

\textbf{Remark:} In \cite{naito1987controllability}, approximate controllability was shown by assuming (A'), compactness of $W$, Lipschitz continuity of $F$ in $V$ and uniform boundedness of $F$ in $V$. Here we no longer require Lipschitz continuity of $F$, but instead, we require the continuity of $W$ in $Z$. We also replace the uniform boundedness of $F$ in $V$ by uniform boundedness of $F$ in $Z$, which is a weaker condition. One of the sufficient conditions on $A$ and $F$ for $W$ to be compact and continuous is the following:
\begin{enumerate}
	\item[(a)] The semigroup $S(t)$ is compact for $t > 0$;
	\item[(b)] $\langle -Ax, x \rangle_V \geq \alpha \|x\|_V^2$ for all $x \in V$ and for some constant $\alpha > 0$;
	\item[(c)] $f(t, u(t))$ is a negative monotone, i.e., $\langle f(t, u) - f(t, v), u - v \rangle \leq 0$ for all $u, v \in V$ and $t \in [0, T]$.
\end{enumerate}

\begin{theorem}\label{Theo:th_2}
    Let $A \in A^\alpha(\omega_0, \theta_0)$ with $\theta_0 \in \left(0, \frac{\pi}{2}\right]$ and $\omega_0 \in \mathbb{R}$. Assume that the $\alpha$-order solution operator $\{T_\alpha(t)\}_{t \geq 0}$ and $\alpha$-order resolvent operator $\{S_\alpha(t)\}_{t \geq 0}$ are compact. Suppose also that the assumptions (A')--(C'), are satisfied then FEE (\ref{eq:P}) is approximately controllable on J.
\end{theorem}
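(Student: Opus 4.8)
The plan is to deduce Theorem \ref{Theo:th_2} from the abstract solvability result Theorem \ref{theorem:Solvable} by casting the approximate controllability of (\ref{eq:P}) as an approximate solvability problem for a semilinear operator equation in a suitable pair of Hilbert spaces. First I would fix the end time and set $E_1 = \overline{R(B)} \subseteq X$ as the source space and $E$ (the state space, where we want density of the reachable set) as the target; the relevant linear operator $L$ is the map sending an admissible control-input $\mu \in \overline{R(B)}$ to the terminal value $y_\mu(a)$ of the mild solution of the corresponding \emph{linear} problem (i.e. (\ref{eq:R}) with $f \equiv 0$), which via the Green's function representation (\ref{eq:chen_11}) is $\mu \mapsto \int_0^a G(a,s)\mu(s)\,ds$. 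The nonlinear operator $N$ is then the correction coming from $f$: concretely $N\mu = y_\mu(a) - L\mu = KF(W\mu)$ evaluated at $t=a$, where $W$ is the solution operator of (\ref{eq:R}) and $F$ the Nemytskii operator of $f$, so that $L\mu + N\mu = y_\mu(a)$ is exactly the terminal state reached under control $\mu$. With this identification, $\mathcal{K}_a(f) = R(L+N)$ and $\mathcal{K}_a(0) = R(L)$, so approximate controllability of (\ref{eq:P}) is precisely approximate solvability of $[\,\overline{R(B)}, E, L+N\,]$.

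Next I would verify the three hypotheses (A), (B), (C) of Theorem \ref{theorem:Solvable} in turn. For (A) — approximate solvability of the linear system $[\,\overline{R(B)}, E, L\,]$ — I would invoke assumption (A') together with the earlier Lemma on the linear controllability operator $S$: condition (B1)/(A') says every target is reachable modulo $N(S)$ by an element of $\overline{R(B)}$, and parts (i)–(iv) of that Lemma give $R(S)=H$ and $R(SB)=H$, from which density of $R(L)$ in $E$ follows; this is essentially the linear approximate controllability of (\ref{eq:Q}). For (B) — compactness and uniform boundedness of $N$ — I would use (B') (the solution operator $W$ is compact and continuous) and (C') (the Nemytskii/Nemytskii operator $F$ is uniformly bounded, $\|Fz\|\le M$): then $N = (\text{evaluation at } a)\circ K\circ F\circ W$ is a composition of a compact map ($W$, or alternatively $K$, using the Remark that $G(t,s)$ is compact for $t>s$) with bounded continuous maps, hence compact; uniform boundedness of $N$ follows from $\|Fz\|\le M$ and boundedness of $K$, giving $\|N\mu\|\le \|K\|M =: M'$ for all $\mu$. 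For (C) — the range condition $R(N)\subseteq R(L)$ — I would argue that for any $\mu$, the element $N\mu \in E$ equals $\int_0^a G(a,s)\,[F(W\mu)](s)\,ds$; since $[F(W\mu)](\cdot)\in X$ and, by (A')/(B1), every element of $X$ (or at least every element of the form arising here) can be matched on $R(S)$ by an element of $\overline{R(B)}$, one gets $N\mu \in R(L)$. Having checked (A), (B), (C), Theorem \ref{theorem:Solvable} yields that $R(L+N)=\mathcal{K}_a(f)$ is dense in $E$, i.e. (\ref{eq:P}) is approximately controllable on $J$.

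I expect the main obstacle to be establishing the range condition (C), $R(N)\subseteq R(L)$, cleanly. The subtlety is that $N\mu$ is built from $F(W\mu)\in X = L^2(J,E)$, and to conclude $N\mu\in R(L)$ one must transfer an arbitrary $L^2$-function through the linear reachability map using only the hypothesis that $\overline{R(B)}$ covers $E$ modulo $N(S)$ — this requires care about whether the relevant density/covering statement (B1) really forces $R(N)\subseteq R(L)$ rather than merely $R(N)\subseteq \overline{R(L)}$, and about the interplay between the finite-sum Green's function at $t=a$ and the operator $S$ defined via $G(t_1,\cdot)$. A secondary delicate point is confirming that the solution operator $W$ constructed in the previous lemmas is genuinely compact \emph{and} continuous on all of $\overline{R(B)}$ under the stated hypotheses (the earlier proofs lean on (H6)/(H7)-type monotonicity, so one should be explicit that (B') is being assumed outright here), and that the composition defining $N$ inherits both properties. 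Modulo these points, the argument is a direct application of Theorem \ref{theorem:Solvable}, and I would close by noting the consistency with \cite{naito1987controllability} as in the Remark preceding the theorem.
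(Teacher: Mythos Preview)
Your overall strategy---reduce to Theorem \ref{theorem:Solvable} with $L\mu=\int_0^a G(a,s)\mu(s)\,ds$ and $N\mu=\int_0^a G(a,s)[F(W\mu)](s)\,ds$ on the source space $\overline{R(B)}$, then check (A), (B), (C)---is exactly the paper's approach, and your verification of (A), (B), (C) from (A'), (B'), (C') matches the paper's one-line justifications.

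There is, however, a genuine gap. Your identification ``$\mathcal{K}_a(f)=R(L+N)$'' is not correct: $\mathcal{K}_a(f)$ is by definition the set of terminal states reached by inputs of the form $Bv$ with $v\in L^2(J,U)$, i.e.\ $(L+N)(R(B))$, whereas your $L+N$ has domain $\overline{R(B)}$. Theorem \ref{theorem:Solvable} only gives you that $(L+N)(\overline{R(B)})$ is dense in $E$, and elements $u\in\overline{R(B)}\setminus R(B)$ do \emph{not} correspond to admissible controls. The paper closes this gap with an explicit approximation argument (the estimate leading to (\ref{eq:Last})): given $u\in\overline{R(B)}$ with $(L+N)u=\mu$, pick $v$ with $\|u-Bv\|_X<\varepsilon$, set $y=Wu$, $y_v=W(Bv)$, and bound $\|y(a)-y_v(a)\|_E$ using the continuity of $W$ and of $F$ in $X$. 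This is precisely where the \emph{continuity} of $W$ (not just its compactness) is used, so what you flagged as a ``secondary delicate point'' is in fact the missing half of the proof. Without this step you have shown density of the wrong set.
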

\begin{proof}
    Let $R(B) = U \subset X$ and consider the operators $K$ and $KN$ as defined in (\ref{eq:Raju_2}) from $U$ into $E$. From (A') it can be seen that $R(KN) \subset R(K)$ and the operator equation $Kv = \mu$ is approximately solvable \cite[theorem 1]{naito1987controllability}. From conditions (B') and (C') it follows that $KN$ is compact and uniformly bounded. Thus the conditions (A')--(C') of Theorem \ref{theorem:Solvable} are satisfied by $K$ and $KN$ and hence the equation $Ku + KNu = \mu$ is approximately solvable, i.e., $\exists $ a set $D \subset E$ such that $\forall \mu \in D$ one can find $u \in R(B)$ such that
\begin{equation}
	\mu = \int_0^a G(t,s)[u(s) + F(Wu(s))] \, ds. 
\end{equation}

Defining $Wu = y$, we obtain that $\mu = y(a)$.

Now since $u \in R(B)$, for any given $\varepsilon > 0$, $\exists v \in L^2(J,U)$ such that $\|u - Bv\|_X \leq \varepsilon$. Let $W(Bv) = y_v$. Then
\begin{align*}
	\|y(a) - y_v(a)\|_E &\leq \left\| \int_0^a G(T,s)[u(s) - Bv(s)] \, ds \right\|_E\\
	&\quad + \left\| \int_0^a G(T,s)[F(y(s)) - F(y_v(s))] \, ds \right\|_E \\
	&\leq K\|u - Bv\|_Z + K \int_0^a \|F(y(s)) - F(y_v(s))\|_E \, ds \\
	&\leq K\varepsilon^{1/2} + K a^{1/2} \|F(y) - F(y_v)\|_X, 
\end{align*}
that is,
\begin{equation}\label{eq:Last}
    \|y(a) - y_u(a)\|_E \leq K\varepsilon^{1/2} + K a^{1/2} \|F(y) - F(y_u)\|_X, 
\end{equation}
where $\|G(t,s)\| \leq N$ for $0 \leq s \leq t \leq a$.

Consider $W$ and $F$ are continuous in X and $\|u -Bv\|_X \leq \epsilon$, the second term on the  RHS  of (\ref{eq:Last}) can be made arbitrarily small. This implies that the LHS of (\ref{eq:Last}) can be made arbitrarily small by the suitable choice of $u$ which  means $\mathcal{K}_a(0)\subset \overline{\mathcal{K}_a(0)}$. Since $\mathcal{K}_a(0)$ is dense in $E$ it follows that (\ref{eq:P}) is approximately controllable.

\end{proof}

\section{Application}
\label{sec:Appli}
Consider the fractional partial differential equation with discrete nonlocal initial condition of the form
\begin{equation}
\begin{aligned}
&\begin{cases}
{}^C D_t^{\frac{3}{4}} u(x,t) = \frac{\partial^2}{\partial x^2} u(x,t) + \frac{\sin(u(x,t))}{t^2 + 1} + \kappa v(x,t), & x \in [0, \pi], \, t \in J, \\
u(0,t) = u(\pi, t) = 0, & t \in J, \\
u(x,0) = \sum_{k=1}^m c_k u(x, t_k), & x \in [0, \pi],
\end{cases}
\end{aligned}
\label{eq:App_1}
\end{equation}
where ${}^C D_t^{\frac{3}{4}}$ is the Caputo fractional derivative of order $\alpha = \frac{3}{4}$, $\kappa$, $a > 0$ are two constants, $J = [0,a]$, $v \in L^2(J, L^2(0,\pi; \mathbb{R}))$, $c_k \in \mathbb{R}$, $k = 1, 2, \ldots, m$.
Let $E = L^2(0, \pi; \mathbb{R})$ with the norm $\| \cdot \|$. We define the linear operator $A$ in the Hilbert space $E$ by
\begin{equation}\label{eq:App_2}
    A u := \frac{\partial^2}{\partial x^2} u, \quad u \in D(A),
\end{equation}
where
\begin{equation}\label{eq:App_3}
    D(A) := \{ u \in L^2(0, \pi; \mathbb{R}) : u'' \in L^2(0, \pi; \mathbb{R}), \, u(0) = u(\pi) = 0 \}. 
\end{equation}

It is well known from \cite{pruss2012evolutionary} that $A$ generates a compact and analytic $C_0$-semigroup $T(t)$ ($t \geq 0$) in $E$, so that $R(\lambda, A) = (\lambda I - A)^{-1}$ is a compact operator for all $\lambda \in \rho(A)$. Therefore, from the subordination principle \cite[Theorems 3.1 and 3.3]{bazhlekova2001fractional}, , we know that $A$ is the infinitesimal generator of an $\alpha$-order compact and analytic solution operator $\{T_\alpha(t)\}_{t \geq 0}$ of analyticity type $(\omega_0, \theta_0)$, which means that $A \in A^\alpha(\omega_0, \theta_0)$ with $\theta_0 \in \big(0, \frac{\pi}{2} \big]$ and $\omega_0 \in \mathbb{R}$. 

By exponential boundedness of the $C_0$-semigroup $T(t)$ ($t \geq 0$), we know that there exist constants $M \geq 1$ and $\omega \in \mathbb{R}$ such that $\|T(t)\| \leq M e^{\omega t}$ for all $t \geq 0$. Furthermore, by \cite[ Corollary 3.2]{bazhlekova2001fractional}we know that the $\alpha$-order solution operator $\{T_\alpha(t)\}_{t \geq 0}$ and $\alpha$-order resolvent operator $\{S_\alpha(t)\}_{t \geq 0}$ generated by $A$ have the following properties
\begin{equation}\label{eq:App_4}
    \|T_\alpha(t)\| \leq M E_\alpha(\omega t^\alpha), \quad \|S_\alpha(t)\| \leq C E_\alpha(\omega t^\alpha) (1 + t^{1-\alpha}), \quad t \geq 0,
\end{equation}
where $E_\alpha(\cdot)$ is the Mittag-Leffler function. Let
\begin{equation}\label{eq:App_5}
    M_T = M \sup_{t \in J} E_\alpha(\omega t^\alpha), \quad M_S = C \sup_{t \in J} E_\alpha(\omega t^\alpha)(1 + t^{1-\alpha}).
\end{equation}
We get from (\ref{eq:App_5}) that  
\begin{equation}\label{eq:App_6}
    \|T_\alpha(t)\| \leq M_T \quad \text{and} \quad \|S_\alpha(t)\| \leq t^{\alpha-1} M_S \quad \text{for } t \in J.
\end{equation}

Let \( u(t) = u(\cdot, t) \), \( f(t, u(t)) = \frac{\sin(u(\cdot, t))}{t^2 + 1} \). We also define the bounded linear operator \( B : U := E \to E \) by \( Bv(t) := \kappa v(t) \). Then the fractional partial differential equation (\ref{eq:App_1}) can be transformed into the abstract form of FEE (\ref{eq:P}).

\begin{theorem}\label{the_appli}
If \( \sum_{k=1}^m |c_k| < 1 / M_T \), then the fractional partial differential equation (\ref{eq:App_1}) has at least one mild solution \( u \in C([0, \pi] \times J) \) and it is approximately controllable on \( J \).
\end{theorem}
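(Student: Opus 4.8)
The plan is to show that the concrete system \eqref{eq:App_1} is a special case of the abstract FEE \eqref{eq:P} and then apply Theorem \ref{theorem:Th_1_Mild} for existence and uniqueness of a mild solution and Theorem \ref{Theo:th_2} for approximate controllability. The functional-analytic setup is already carried out in the paragraphs preceding the statement: with $E=L^2(0,\pi;\mathbb{R})$ and $A=\partial_{xx}$ under Dirichlet boundary conditions, $A$ generates a compact analytic $C_0$-semigroup, hence by the subordination principle $A\in\mathcal{A}^{\alpha}(\omega_0,\theta_0)$ with $\alpha=3/4$ and both $\{T_\alpha(t)\}_{t\ge0}$ and $\{S_\alpha(t)\}_{t\ge0}$ compact, with the norm bounds \eqref{eq:App_6}; taking $Bv(t)=\kappa v(t)$ and $f(t,u)=\sin(u(\cdot,t))/(t^2+1)$ identifies \eqref{eq:App_1} with \eqref{eq:P}.

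First I would verify the hypotheses of Theorem \ref{theorem:Th_1_Mild}. Condition (H1) is exactly the assumption $\sum_{k=1}^m|c_k|<1/M_T$, which guarantees that $\mathcal{O}$ in \eqref{eq:chen_2} is bounded and that the Green's function \eqref{eq:chen_9} is well defined, compact for $t>s$, and bounded by some $N>0$. For (H2), the pointwise bound $|\sin(\cdot)|\le1$ gives $\|f(t,u)\|_E\le\sqrt{\pi}/(t^2+1)=:\varphi(t)$, so the growth condition holds with any $L>0$ (the term $L\|u\|_E$ being slack) and $\varphi\in L^{\beta}(J,\mathbb{R}^+)$ satisfying $\big(\int_0^a\varphi^{\beta}\big)^{1/\beta}<\infty$ for every $0\le\beta<\alpha$; continuity of $f$ follows since $s\mapsto\sin s$ is globally Lipschitz (so its Nemytskii map is continuous on $E$) and $t\mapsto1/(t^2+1)$ is continuous. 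Condition (H3) is a boundedness requirement on $Bv$; for any fixed admissible control it holds with $\psi(t)=|\kappa|\,\|v(t)\|_E\in L^2(J)\subset L^{\gamma}(J)$ for $\gamma<\alpha$. Theorem \ref{theorem:Th_1_Mild} then yields a unique mild solution on $J$, and the smoothing of $S_\alpha$ together with the continuity of the data gives $u\in C([0,\pi]\times J)$.

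For approximate controllability I would check (A')--(C') and invoke Theorem \ref{Theo:th_2}. Since $\kappa\ne0$, $B=\kappa I$ has $R(B)=E$, so (A') (equivalently the linear condition (B1)) is trivially satisfied and the linear reachable set $\mathcal{K}_a(0)$ is dense in $E$. Condition (C'), uniform boundedness of the Nemytskii operator $F$, holds because $\|Fz\|_X=\|f(\cdot,z(\cdot))\|_{L^2(J;E)}\le\|\varphi\|_{L^2(J)}=:M$ uniformly in $z$. Condition (B'), compactness and continuity of the solution operator $W$ defined through \eqref{eq:R}, is obtained as in Lemma \ref{lemma:Lemma_2.1} and the lemma following it: compactness of $S_\alpha(t)$ for $t>0$ makes $G(t,s)$ compact for $t>s$ and hence the linear operator $K$ of \eqref{eq:Raju_2} compact, while the coercivity of $-A$ (the least Dirichlet eigenvalue is $1$, so $\langle-Au,u\rangle_E\ge\|u\|_E^2$, giving (H4)/(H6)) supplies the a priori growth bound and well-posedness of $W$. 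With (A')--(C') in hand, Theorem \ref{Theo:th_2} shows that $\mathcal{K}_a(f)$ is dense in $E$, i.e. \eqref{eq:App_1} is approximately controllable on $J$.

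The main obstacle is (B'): establishing that $W$ is genuinely compact and continuous. This rests on the compactness of the resolvent operator $S_\alpha$ combined with the monotonicity arguments of \autoref{sec:Approx}, and some care is needed because the scalar nonlinearity $s\mapsto\sin s$ is not globally negative-monotone, so hypothesis (H5)/(H7) is not literally in force; the way around this is to use instead that $f$ is bounded and globally Lipschitz with constant at most $1$, which already forces $KN$ to be compact and continuous from the compactness of $K$ and the uniform boundedness (C') — and this is all that the proof of Theorem \ref{Theo:th_2} actually exploits. Pinning down the constants $M_T$, $M_S$, $N$ via the Mittag--Leffler bounds \eqref{eq:App_6} is then routine, completing the verification.
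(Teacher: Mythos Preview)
Your proposal is correct and follows essentially the same route as the paper: cast \eqref{eq:App_1} in the abstract form \eqref{eq:P}, verify (H1)--(H3) to invoke Theorem~\ref{theorem:Th_1_Mild} for existence of a mild solution, and then check the hypotheses of Theorem~\ref{Theo:th_2} for approximate controllability. In fact you are more careful than the paper's own proof, which simply asserts that the relevant assumptions hold with $\alpha=3/4$, $\beta=1/3$, $\gamma=1/2$, $\varphi(t)=\sqrt{\pi}/(t^2+1)$, $\psi(t)=\kappa v(t)$ without explicitly addressing (A$'$)--(C$'$) or the monotonicity issue you flag for $\sin$.
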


\begin{proof}
By the assumption $\sum_{k=1}^m |c_k| < 1 / M_T$, it is easy to see that the assumption (H1) holds. From the definition of nonlinear term $f$ and bounded linear operator $B$ combined with the above discussion, we can easily verify that the assumptions (H2)', (H3) and (H4) are satisfied with 
\[
\alpha = \frac{3}{4}, \quad \beta = \frac{1}{3}, \quad \gamma = \frac{1}{2}, \quad \varphi(t) = \frac{\sqrt{\pi}}{t^2 + 1}, \quad \text{and} \quad \psi(t) = \kappa v(t).
\]

Therefore, our conclusion follows from Theorem \ref{theorem:Th_1_Mild} and Theorem \ref{Theo:th_2}. This completes the proof of Theorem \ref{the_appli}. 
\end{proof}

\bibliographystyle{unsrtnat} 
\bibliography{references} 

\end{document}